\newtheorem{lemma}{Lemma}[section]
\newtheorem{theorem}{Theorem}[section]
\newcommand{\sgn}{\,{\rm sgn}}
\def\rr{\mathbb{R}}
\def\la{\lambda}
\def\cgk{C_{GK}}
\title[A nonlocal convection diffusion equation]{Long-time behavior for a  nonlocal convection diffusion equation}
\author[L. I. Ignat, T. I. Ignat ]{Liviu I. Ignat, Tatiana I. Ignat}
\date{}                                           
\address{L. I. Ignat
\hfill\break\indent Institute of Mathematics ``Simion Stoilow'' of the Romanian Academy\\
\hfill\break\indent  21 Calea Grivitei Street \\010702 Bucharest \\ Romania.}
 \email{{\tt
liviu.ignat@gmail.com}\hfill\break\indent  {\it Web page: }{\tt
http://www.imar.ro/\~\,lignat}}
\address{T. I. Ignat
\hfill\break\indent Research group of the project PN-II-RU-TE-2014-4-0657\\
\hfill\break\indent Institute of Mathematics ``Simion Stoilow'' of the Romanian Academy\\
\hfill\break\indent  21 Calea Grivitei Street \\010702 Bucharest \\ Romania 
}
 \email{\tt tatiana.ignat@gmail.com}
\begin{document}

\begin{abstract}
In this paper we consider a nonlocal viscous Burgers' equation and study the well-posedness and  asymptotic behaviour of its solutions. We prove that under the  smallness assumption on the initial data the solutions behave as the self similar profiles of the Burgers' equation with Dirac mass as the initial datum. The first term in the asymptotic expansion of the solutions is obtained by rescaling the solutions and proving the compactness of their trajectories. 
\end{abstract}

\maketitle

{\textit{Mathematics Subject Classification 2010}:}  76Rxx, 35B40, 45M05, 45G10.

{\textit{Key words}:} asymptotic behavior, nonlocal diffusion, nonlocal convection

\section{Introduction}

In this paper we analyze the following equation 
\begin{equation}\label{conv-diff}
\left\{
\begin{array}{rl}
\ u_t(t,x)&=\displaystyle\int _{\rr}K(x-y)(u(t,y)-u(t,x))dy\\
&\quad \quad  \quad \displaystyle+\int _{\rr} G(x-y)f\Big(\frac {u(t,y)+u(t,x)}2\Big)dy,t>0, x\in \rr,\\[10pt]
u(0)&=\varphi,
\end{array}
\right.
\end{equation}
in the particular case when $f(u)=u^2$. 
This model has been proposed in  \cite{MR2888353} as a regularization of the following nonlocal advection equation inspired by the peridynamic theory
\[
  \displaystyle u_t(t,x)=\int _{\rr} G(x-y)f\Big(\frac {u(t,y)+u(t,x)}2\Big)dy.
\]
The general model in \cite{MR2888353} assumes that $K$ is a nonnegative even function and $G$ is an odd function. We consider here kernels $K$ and $G$ that are integrable.  For simplicity we  assume that $K$ has mass one.
We will analyze the well-posedness of system \eqref{conv-diff} and the long time behaviour of its solutions. 
The results presented here hold under the assumption that kernel $K$ dominates $G$, i.e. for some positive constant $C=C_{GK}$ the following holds
\begin{equation}\label{hyp.1}
|G(x)|\leq \cgk K(x), \quad \forall \ x\in \rr.
\end{equation}
Using that function $G$ is an odd function and the nonlinearity is given by $f(u)=u^2$  equation \eqref{conv-diff} can be written as
\begin{equation}\label{conv-diff-1}
\left\{
\begin{array}{ll}
u_t=K\ast u-u+G\ast \frac{u^2}4 + (G\ast  u )\frac u2,& ,t>0,x\in \rr,\\[10pt]
u(0)=\varphi.
\end{array}
\right.
\end{equation}


In the particular case 
when $K(x)=e^{-|x|}/2$ and $G=K'$ the considered model is similar, excepting the last term, $(G\ast u)u$, to the BBM-Burgers equation than can be written in the form
\[
  u_t=K\ast u-u +G\ast (\alpha u+\beta u^2),
\]
for some real constants $\alpha$ and $\beta$.
In this case the decay of solutions has been studied under various assumptions on the regularity of the solutions,  i.e. $u(0)\in L^1(\rr)\cap H^2(\rr)$ in \cite{MR1012198}, small $L^1(\rr)\cap H^1(\rr)$ solutions in \cite{MR1727212}. A different approach with a better convergence to the asymptotic profile has been obtained in \cite{MR1847842}. However,  the asymptotic profile in \cite{MR1847842} is not explicit as the one obtained in \cite{MR1727212} or \cite{0762.35011}.

In this paper we first prove the global well-posedness of system \eqref{conv-diff} and the decay of its solutions. 

\begin{theorem}\label{global}For any $u_0\in  L^1(\rr)\cap L^\infty(\rr)$ there exists a unique local solution $u\in C([0,T_{max}], L^1(\rr)\cap L^\infty(\rr) )$ of equation \eqref{conv-diff}, a solution that conserves the mass of the initial data. Moreover,  under the smallness assumption on the initial data $\|\varphi\|_{ L^\infty(\rr)}< 1/\cgk$ the solution is global, preserves the sign of the initial data and satisfies
\begin{equation}\label{stability-l1}
\|u(t)\|_{L^1(\rr)}\leq \|\varphi\|_{L^1(\rr)},
\end{equation}
\begin{equation}\label{stability-infty}
\|u(t)\|_{L^\infty(\rr)}\leq \|\varphi\|_{L^\infty(\rr)}.
\end{equation}
\end{theorem}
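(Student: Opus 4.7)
The plan is to handle local well-posedness by a Banach fixed point, mass conservation by a short integration, and the sharp bounds under smallness by rewriting the equation so that a nonlocal maximum principle appears. For local existence I set up a contraction on $C([0,T];L^1(\rr)\cap L^\infty(\rr))$ for the mild form
\[
u(t)=e^{-t}\varphi+\int_0^t e^{-(t-s)}\Bigl[K\ast u+\tfrac14\,G\ast u^2+\tfrac12\,u\,(G\ast u)\Bigr](s)\,ds,
\]
Young's inequality controlling the linear convolutions (since $K,G\in L^1(\rr)$ via \eqref{hyp.1}) and the quadratic nonlinearity being locally Lipschitz on bounded balls of $L^1\cap L^\infty$; this produces a unique maximal solution with the usual blow-up alternative. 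Mass conservation then follows by a one-line integration of \eqref{conv-diff-1}: $\int K\ast u=\int u$ cancels $-\int u$, $\int G\ast u^2=0$ because $\int G=0$, and $\int u\,(G\ast u)\,dx=0$ by antisymmetry under $x\leftrightarrow y$ (as $G$ is odd).

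The heart of the argument is \eqref{stability-infty}. Using $\int G=0$ and $f(u)=u^2$, I rewrite \eqref{conv-diff} in the form
\[
u_t(x)=\int_\rr\bigl(u(y)-u(x)\bigr)H_u(x,y)\,dy,\qquad H_u(x,y):=K(x-y)+\frac{u(y)+3u(x)}{4}\,G(x-y),
\]
and note that \eqref{hyp.1} gives $\bigl|\tfrac{u(y)+3u(x)}{4}G(x-y)\bigr|\le\cgk\|u\|_\infty K(x-y)$, so $H_u\ge 0$ whenever $\cgk\|u(t)\|_\infty\le 1$. The smallness of $\varphi$ and continuity of $t\mapsto\|u(t)\|_\infty$ then define a maximal bootstrapping interval $[0,T^\ast)$ on which $H_u\ge 0$. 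On this interval I test against $|u|^{p-2}u$ for $p>1$: convexity of $s\mapsto|s|^p/p$ yields $p|u(x)|^{p-2}u(x)(u(y)-u(x))\le|u(y)|^p-|u(x)|^p$, and Fubini together with the identities $\int H_u(x,y)\,dx=1-\tfrac34(G\ast u)(y)$ and $\int H_u(x,y)\,dy=1+\tfrac14(G\ast u)(x)$ produce
\[
\frac{d}{dt}\int|u|^p\,dx\le-\int|u|^p(G\ast u)\,dx\le\cgk\|u\|_\infty\int|u|^p\,dx
\]
(using $|G\ast u|\le\cgk\|u\|_\infty$). Gronwall gives $\|u(t)\|_p\le\|\varphi\|_p\exp\!\bigl(\tfrac{\cgk}{p}\int_0^t\|u\|_\infty\,ds\bigr)$; letting $p\to\infty$ the $1/p$ factor forces the exponent to $0$, producing \eqref{stability-infty}. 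This closes the bootstrap, since $\|u(t)\|_\infty<1/\cgk$, so $T^\ast=T_{\max}$.

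For sign preservation I run the same convexity computation with the convex function $\phi(s):=(s_-)^2$ (or $(s_+)^2$ in the case $\varphi\le 0$): $\phi(\varphi)\equiv 0$ by assumption, and the same Gronwall inequality forces $\phi(u(t))\equiv 0$, hence $u$ retains the sign of $\varphi$. Once $u$ has constant sign, \eqref{stability-l1} reduces to mass conservation: $\|u(t)\|_1=\bigl|\int u\bigr|=\bigl|\int\varphi\bigr|=\|\varphi\|_1$. The uniform $L^1\cap L^\infty$ bound then rules out finite-time blow-up, giving $T_{\max}=\infty$. I expect the only real obstacle to be \eqref{stability-infty}: a naive Duhamel/Young estimate on the mild form yields only exponential growth and loses the cancellations carried by the parity of $G$; the decisive step is the rewriting through $H_u$, which lets $K$ dominate $G$ precisely under the threshold $\|u\|_\infty<1/\cgk$, the $1/p$ factor in the Gronwall exponent being exactly what makes the $p\to\infty$ limit sharp.
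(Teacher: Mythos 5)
Your rewriting of the equation through $H_u(x,y)=K(x-y)+\frac{u(y)+3u(x)}{4}G(x-y)$ is correct (the discarded term $-\tfrac34 u(x)^2\int G=0$), the identities $\int H_u(x,y)\,dx=1-\tfrac34(G\ast u)(y)$ and $\int H_u(x,y)\,dy=1+\tfrac14(G\ast u)(x)$ check out, and the resulting $L^p$--Gronwall bound $\|u(t)\|_p\le\|\varphi\|_p\exp(\tfrac{\cgk}{p}\int_0^t\|u\|_\infty)$ with $p\to\infty$ is a clean and genuinely different route to \eqref{stability-infty} and to sign preservation: the paper instead writes $u=m+v$, tests with $\sgn(v^+)$, and uses the antisymmetry identity $\iint w(y)G(x-y)w(x)=0$ applied to $v^+$. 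Your version trades that truncation argument for a convexity inequality plus the vanishing $1/p$ in the exponent, and it closes the bootstrap correctly.

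The genuine gap is \eqref{stability-l1} for sign-changing initial data. Your convexity-plus-Fubini computation at $p=1$ leaves the residual term $-\int|u|\,(G\ast u)\,dx$, which has no sign; Gronwall then yields only $\|u(t)\|_1\le\|\varphi\|_1 e^{\cgk\int_0^t\|u\|_\infty}$, i.e.\ exponential growth, and your fallback ``$\|u(t)\|_1=|\int u|$'' is valid only once $u$ has constant sign. The theorem asserts \eqref{stability-l1} for every $\varphi$ with $\|\varphi\|_\infty<1/\cgk$, signed or not. The inequality $\sgn(u(x))(u(y)-u(x))\le|u(y)|-|u(x)|$ is exactly where the needed cancellation is destroyed: the paper's proof instead splits the double integral according to the signs of $u(x)$ and $u(y)$ (the off-diagonal sign regions contribute nonpositively because the bracket is nonnegative), replaces $u$ by $u^\pm$ inside the kernel on each same-sign region, and then applies the antisymmetry identity to $u^+$ and to $u^-$ separately, which annihilates precisely the cross term that survives as $-\int|u|(G\ast u)$ in your computation. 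The fix stays inside your framework --- redo the $p=1$ step with the sign-splitting rather than the global convexity bound, and you recover $\frac{d}{dt}\int|u|\le 0$ --- but as written the proof of \eqref{stability-l1} is incomplete for data that change sign.
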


\begin{theorem}\label{decay}Assume the conditions in Theorem \ref{global} and in addition $K\in L^1(\rr,1+x^2)$.  The solution of system \eqref{conv-diff} satisfies      
\begin{equation}\label{decay.2}
\|u(t)\|_{L^2(\rr)}\leq C(\|\varphi\|_{L^1(\rr)}, \|\varphi\|_{L^\infty(\rr)})t^{-\frac 14}, \quad \forall t>0. 
\end{equation}
Moreover, if the initial data satisfy  $\|u_0\|_{ L^\infty(\rr)}< 1/2\cgk$  the following estimate holds for any
$2\leq p<\infty$
\begin{equation}\label{decay.p}
\|u(t)\|_{L^p(\rr)}\leq C(\|\varphi\|_{L^1(\rr)}, \|\varphi\|_{L^\infty(\rr)})t^{-\frac 12(1-\frac 1p)}, \quad \forall t>0. 
\end{equation}
\end{theorem}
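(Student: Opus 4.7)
The plan is to combine an energy identity with a Nash-type inequality for the nonlocal diffusion operator, mirroring the classical treatment of the viscous Burgers equation.

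First I would multiply \eqref{conv-diff-1} by $u$ and integrate. Symmetrization of the diffusion yields
\[
\tfrac12\tfrac{d}{dt}\|u\|_{L^2}^2+\tfrac12 D(u)=\tfrac14\!\int_\rr u^2(G\ast u)\,dx, \qquad D(u):=\int_\rr\!\!\int_\rr K(x-y)(u(x)-u(y))^2\,dx\,dy,
\]
the simplification of the right-hand side relying on the identity $\int f(G\ast g)\,dx=-\int g(G\ast f)\,dx$ that follows from the oddness of $G$. Antisymmetrizing once more rewrites the residual as $\tfrac18\int\!\!\int G(x-y)u(x)u(y)(u(x)-u(y))\,dx\,dy$, which via Cauchy--Schwarz weighted by $K$ and the domination $|G|\le\cgk K$ is bounded by a constant multiple of $\cgk\|u\|_{L^\infty}\|u\|_{L^2}\sqrt{D(u)}$. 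Young's inequality then absorbs a fraction of $D(u)$, and the smallness hypothesis $\cgk\|\varphi\|_{L^\infty}<1$ from Theorem \ref{global} keeps the remainder in check.

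Second, I would apply a Nash-type inequality. Since $K\in L^1(\rr,1+x^2)$, $\hat K\in C^2$ with $\hat K(0)=1$ and $\hat K''(0)=-\int x^2 K(x)\,dx<0$, while $|\hat K(\xi)|<1$ for $\xi\ne 0$; hence $1-\hat K(\xi)\ge c\min(\xi^2,1)$. Via Plancherel, $D(u)=2\int(1-\hat K)|\hat u|^2\,d\xi$, and a frequency splitting at a cutoff $|\xi|=R$ (bounding $|\hat u|\le\|u\|_{L^1}$ on $\{|\xi|<R\}$ and using the lower bound on $1-\hat K$ on the complement), optimized in $R$, gives $\|u\|_{L^2}^6\le C\,D(u)\,\|u\|_{L^1}^4$. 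Combining this with the energy inequality and the mass bound $\|u(t)\|_{L^1}\le\|\varphi\|_{L^1}$ provides a scalar differential inequality $y'\le -c\,y^3/\|\varphi\|_{L^1}^4$ for $y=\|u\|_{L^2}^2$ that integrates to \eqref{decay.2}.

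For the $L^p$ estimate \eqref{decay.p} with $p>2$, I would test \eqref{conv-diff-1} against $|u|^{p-2}u$ and use the convexity inequality $(|a|^{p-2}a-|b|^{p-2}b)(a-b)\ge \tfrac{4(p-1)}{p^2}(|a|^{p/2-1}a-|b|^{p/2-1}b)^2$ to produce a dissipation in the rescaled variable $w=|u|^{p/2-1}u$, for which $\|w\|_{L^2}^2=\|u\|_{L^p}^p$. The nonlinear contribution is handled in the same spirit; the extra factor $p$ coming from the $L^p$ derivative is precisely the reason behind the strengthened smallness $\|\varphi\|_{L^\infty}<1/(2\cgk)$, which ensures absorption uniformly in $p\ge 2$. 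Applying the Nash-type inequality to $w$ and interpolating $\|w\|_{L^1}=\|u\|_{L^{p/2}}^{p/2}$ between $\|u\|_{L^1}$ and the $L^2$ rate already obtained then delivers $\|u(t)\|_{L^p}\le Ct^{-(1-1/p)/2}$.

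The main obstacle is the absence of exact cancellation in the nonlinear energy term, in contrast to the local viscous Burgers equation where $\int u\,\partial_x(u^2)\,dx=0$ annihilates it. Here the residual $\tfrac14\int u^2(G\ast u)\,dx$ is generically nonzero, and strictly dominating it by the diffusive dissipation demands quantitative use of both $|G|\le\cgk K$ and the smallness of $\|\varphi\|_{L^\infty}$ at every step, so that the Nash-based differential inequality actually closes.
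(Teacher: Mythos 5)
Your overall skeleton (an $L^p$ energy identity combined with a Nash/Fourier-splitting iteration, as in the reference the paper itself invokes for the final step) matches the paper's, but the crucial step --- dominating the nonlocal convection term by the dissipation --- does not close as you have set it up. After antisymmetrization the residual is $\tfrac18\iint G(x-y)\,u(x)u(y)(u(x)-u(y))\,dx\,dy$, whose integrand vanishes only to \emph{first} order on the diagonal $u(x)=u(y)$, whereas the dissipation $D(u)$ vanishes to second order there; no pointwise comparison is possible. Your Cauchy--Schwarz therefore yields $C\,\cgk\|u\|_{L^\infty}\|u\|_{L^2}\sqrt{D(u)}$, and Young's inequality leaves behind an unabsorbed term proportional to $\|u\|_{L^2}^2$. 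The resulting differential inequality for $y=\|u\|_{L^2}^2$ is of the form $y'\le -c_1y^3+c_2y$ (after the Nash inequality), whose solutions approach the positive equilibrium $\sqrt{c_2/c_1}$ rather than decaying to zero. The smallness hypothesis $\cgk\|\varphi\|_{L^\infty}<1$ only shrinks $c_2$ by a bounded factor; it cannot remove this term. The same defect propagates to your $L^p$ step, where the leftover would be $C\|u\|_{L^p}^p$.

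The paper's remedy (Lemma \ref{i2i1}) is to first reduce to nonnegative solutions via the comparison principle of Theorem \ref{max-principle}, and then to exploit the oddness of $G$ more aggressively: one may add to the integrand any combination $\alpha u^{p+1}(y)+\beta u^{p+1}(x)$ at zero cost, and $\alpha,\beta$ are chosen so that the resulting polynomial in $z=u(x)/u(y)$ has a \emph{double} zero at $z=1$ (for $p=2$ one gets $P(z)=(z-1)^2(z-2)/12$). This produces the pointwise bound $|I_2|\le C(p)\,\cgk\,\|u\|_{L^\infty}\,I_1$ with no lower-order remainder, so the entire nonlinear contribution is a definite fraction of the dissipation and the Nash iteration closes; the threshold $1/(2\cgk)$ for large $p$ arises because $C(p)=p/(p+1)\to 1$, matching your heuristic. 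To repair your argument you would need an analogous second-order cancellation at the pointwise level before estimating, not Cauchy--Schwarz followed by Young. Your Fourier-splitting and Stroock--Varopoulos-type $L^p$ machinery are otherwise standard and consistent with what the paper imports from the cited reference.
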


The above condition on the smallness of the $L^\infty(\rr)$-norm of the solution  is similar to the CFL-condition that appears in the study of the stability of the numerical approximations for conservation laws, see \cite[Ch. 3]{0768.35059}.
 This guarantees that the diffusive part controls the nonlinear convective term. The difference with the previous works on nonlocal convection-diffusion equations \cite{MR2356418,MR3190994,denisa} is that in this case the convective term 
 \[
  Tu=\int _{\rr} G(y-x)(\frac {u(t,y)+u(t,x)}2)^2dy
\]
does not satisfy the dissipative condition $(Tu,u)_{L^2(\rr)}\leq 0$. Thus, extra assumptions should be imposed to the initial data, so on the solutions, to control this term by the diffusive part.   

The condition on $K$ to belong to $L^1(\rr,1+x^2)$ and \eqref{hyp.1} imposes that $G\in L^1(\rr,1+x^2)$. This condition on $G$ does not appear in the previous works \cite{MR2356418,MR3190994,denisa} where $G\in L^1(\rr)$ is sufficient to obtain the decay of the solutions and $G\in L^1(\rr,1+|x|)$ to obtain
the first term in the asymptotic expansion of solutions. Regarding the assumption \eqref{hyp.1} we don't know whether there is blow-up in finite time when this condition does not hold. The same happens with the smallness assumption on the initial data. These questions remain to be analyzed.

%
%
%

We need to assume  $\|u_0\|_{ L^\infty(\rr)}< 1/(2\cgk)$   in order to prove the decay property  \eqref{decay.p} for large $p$. When werestrict the range of $p$ this condition can be slightly improved as in the case $p=2$. This condition can be merely technical and it is related with the best constant in some inequalities that we will use in the proof of Theorem \ref{decay}.

We now introduce the following quantities
$$A=\frac 12 \int _{\rr}K(z)z^2dz\quad \text{and}\quad B=\int _{\rr}G(z)zdz.$$
The asymptotic expansion of the solutions of system \eqref{conv-diff} will depend on the previous quantities $A$ and $B$.
The main result concerning the asymptotic expansion for the solutions of system \eqref{conv-diff} is the following one. 
\begin{theorem}\label{asimp}Let us assume that $K\in L^1(\rr,1+x^2)$ is positive in a neighborhood of the origin.
For any $\varphi\in L^1(\rr^d)\cap L^\infty(\rr^d)$ with $\|\varphi\|_{L^\infty(\rr)}\leq 1/(2\cgk)$  solution $u$ of system \eqref{conv-diff} satisfies
\begin{equation}\label{lim.t}
\lim _{t\rightarrow \infty} t^{\frac  d2(1-\frac 1p)}\|u(t) -U(t)\|_{L^p(\rr^d)}=0, \quad  1\leq p<\infty,
\end{equation}
where $U$ is the solution of the viscous Burgers' equation 
\begin{equation}\label{burgers}
\left\{
\begin{array}{ll}
U_t=A  U_{xx}- \frac {B}2(U^2)_x,&  t>0,x\in \rr^d,\\[10pt]
U(0)=m\delta _0,
\end{array}
\right.
\end{equation}
 and $m$ is the mass of the initial data $\varphi$.
\end{theorem}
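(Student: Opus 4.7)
My plan is to combine a parabolic rescaling with a compactness and uniqueness argument, following the scheme standard for nonlocal convection--diffusion equations. Introduce the rescaled solution and kernels by $u_\lambda(t,x) := \lambda u(\lambda^2 t, \lambda x)$, $K_\lambda(x) := \lambda K(\lambda x)$, $G_\lambda(x) := \lambda G(\lambda x)$; each preserves the corresponding $L^1$-mass, and a direct change of variables shows that $u_\lambda$ satisfies
\begin{equation*}
\partial_t u_\lambda = \lambda^2\bigl(K_\lambda\ast u_\lambda - u_\lambda\bigr) + \lambda\, G_\lambda\ast\tfrac{u_\lambda^2}{4} + \tfrac{u_\lambda}{2}\bigl(\lambda G_\lambda\ast u_\lambda\bigr).
\end{equation*}
For any $\phi\in C^2_c(\rr)$, the evenness of $K$, the oddness of $G$, and the second-moment integrability of $K$ and $G$ (the latter following from \eqref{hyp.1}) yield, via Taylor expansion, the operator convergences
\begin{equation*}
\lambda^2\bigl(K_\lambda\ast\phi - \phi\bigr) \longrightarrow A\,\phi''\qquad\text{and}\qquad \lambda\,G_\lambda\ast\phi\longrightarrow -B\,\phi',
\end{equation*}
accompanied by the uniform operator bounds $\|\lambda^2(K_\lambda\ast\phi - \phi)\|_{L^\infty}\leq C\|\phi''\|_{L^\infty}$ and $\|\lambda G_\lambda\ast\phi\|_{L^\infty}\leq C\|\phi'\|_{L^\infty}$, which will be indispensable when testing against the equation.

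Next I establish uniform bounds and compactness of $\{u_\lambda\}_{\lambda\geq 1}$. Theorems \ref{global} and \ref{decay}, applied to $u$ and translated by the scaling, give the uniform-in-$\lambda$ decay $\|u_\lambda(t)\|_{L^p(\rr)}\leq C(m,\|\varphi\|_{L^\infty})\,t^{-(1-1/p)/2}$ for every $t>0$ and $1\leq p<\infty$ (interpolating the endpoints $p=1$ and $p\geq 2$). To obtain strong compactness in $C([t_0,T];L^1_{\mathrm{loc}}(\rr))$ for any $0<t_0<T$, I combine two ingredients: first, a spatial equicontinuity estimate obtained by multiplying the $u_\lambda$-equation by $u_\lambda$, integrating, and absorbing the non-dissipative nonlinear contribution into the nonlocal Dirichlet form
\begin{equation*}
\iint K_\lambda(x-y)\bigl(u_\lambda(t,y)-u_\lambda(t,x)\bigr)^2\,dx\,dy,
\end{equation*}
which is possible thanks to the smallness hypothesis $\|\varphi\|_{L^\infty}<1/(2\cgk)$ and the positivity of $K$ near the origin; and second, a time-equicontinuity estimate obtained by testing the equation against $C^2_c(\rr)$ functions and invoking the operator bounds from the previous step. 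A nonlocal Aubin--Lions argument then produces a subsequence $u_{\lambda_n}\to U$ in $C([t_0,T];L^1_{\mathrm{loc}}(\rr))$.

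Passing to the limit in the weak formulation, the operator convergences identify $U$ as a weak solution of $U_t=A\,U_{xx}-\tfrac{B}{2}(U^2)_x$ on $(0,\infty)\times\rr$; the two convective pieces each contribute $-\tfrac{B}{4}(U^2)_x$. Testing the equation for $u_\lambda$ against a smooth compactly supported $\phi$ and using the $L^p$-decay and operator bounds yields $\bigl|\int u_\lambda(t)\phi - \int u_\lambda(0)\phi\bigr|= O(t^\alpha)$ for some $\alpha>0$ uniformly in $\lambda$; together with $\int u_\lambda(0)\phi \to m\phi(0)$, this forces $U(t,\cdot)\rightharpoonup m\delta_0$ as $t\to 0^+$. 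Uniqueness of the source-type solution of the viscous Burgers equation, classical via the Hopf--Cole transformation, then identifies $U$ uniquely, so the whole family $\{u_\lambda\}$ converges; translating back via $t=\lambda^2$ and the self-similarity $U(t,x)=t^{-1/2}U(1,x t^{-1/2})$ yields \eqref{lim.t}. The main obstacle is the compactness step: since the convective operator $Tu$ is not dissipative in $L^2$, securing the nonlocal energy estimate requires delicately absorbing $Tu$ into the nonlocal dissipation by means of \eqref{hyp.1} and the smallness assumption on $\|\varphi\|_{L^\infty}$, and the strong convergence so obtained is precisely what enables passage to the limit in the quadratic nonlinearities.
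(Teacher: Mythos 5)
Your outline follows the same route as the paper's proof: the parabolic rescaling $u_\lambda$, uniform $L^p$ decay inherited from Theorem \ref{decay}, a nonlocal energy (Dirichlet-form) estimate obtained by absorbing the non-dissipative convective term using \eqref{hyp.1} and the smallness of $\|\varphi\|_{L^\infty(\rr)}$, an $H^{-1}$ bound on $\partial_t u_\lambda$ from testing against $C^2_c$ functions, a nonlocal Aubin--Lions compactness theorem that uses the positivity of $K$ near the origin, identification of the limit as the source-type solution of \eqref{burgers} (your bookkeeping that each of the two nonlinear terms contributes $-\tfrac{B}{4}(U^2)_x$ is correct), and uniqueness of that solution to upgrade subsequential convergence to convergence of the whole family.

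The one ingredient missing from your plan is the control of the tails of $u_\lambda$. The compactness argument you describe only yields convergence in $L^1_{\mathrm{loc}}(\rr)$ in space, whereas \eqref{lim.t} at $p=1$, read back through the scaling, is precisely $\|u_\lambda(t_0)-U(t_0)\|_{L^1(\rr)}\to 0$; one must rule out mass escaping to infinity, and the $L^p$ rates for $p>1$ are then obtained by interpolating this global $L^1$ convergence against the uniform decay in higher $L^q$ norms. The paper handles this in Lemma \ref{strong} with the uniform-in-$\lambda$ estimate
\begin{equation*}
\int_{|x|>2R}u_\lambda (t,x)\,dx\leq\int_{|x|>R}\varphi(x)\,dx+C\Bigl(\frac t{R^2}+\frac{t^{1/2}}{R}\Bigr),
\end{equation*}
proved by testing the rescaled equation against a cutoff supported in $\{|x|>R\}$; it uses $K\in L^1(\rr,1+x^2)$ and, through \eqref{hyp.1}, the first moment of $G$, together with the $L^2$ decay of $u_\lambda$. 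You should add this tightness step; everything else in your proposal matches the paper's argument.
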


Next, we say a few words about the above asymptotic profile $U$. 
The well-posedness of  system \eqref{burgers} has been analyzed in \cite{MR1233647} in the one-dimensional case and  in \cite{MR1266100} in the multi-dimensional case. Function $U$ satisfies $U(t,x)=t^{-1/2}f_m(x/\sqrt{t})$ where $f_m$ is the smooth solution with mass $m$ of  the equation
\[
  -A f_m''-\frac 12 x f_m'(x)=\frac 12 f_m -  \frac{B}2(f_m^2)'(x) \quad \text{in}\  \rr.
\]
It has been proved in \cite{MR1032626} that the profile $f_m$ is of constant sign and decays exponentially to zero as $|x|\rightarrow \infty$. The interested reader can consult \cite{MR1032626} for more properties of these profiles $f_m$.

One of the central properties in the analysis of Burgers' equation is the so-called Oleinik's estimate $u_x\leq 1/t$. In the nonlocal setting there are few results in this direction under the assumption that the convection is local $(|u|^{q-1}u)_x$, $q\in (1,2]$, see \cite{MR2138795, cazacu}. The existence of similar estimates, in local or nonlocal form, for the models where the convection is nonlocal is, as far the authors know, an open problem.

In this paper we consider the nonlinearity $f(u)=u^2$. The extension of the results in this paper to the case of more general nonlinearities of the form $f(u)=|u|^{q-1}u$, $q>1$ or the multi-dimensional case  remains to be investigated in the future.

The paper is organized as follows. In Section \ref{well} we prove Theorem \ref{global} and Theorem \ref{decay}. Section \ref{expansion} concerns the asymptotic expansion of the solutions of Theorem \ref{asimp}.

\section{Global well-posedness and the decay of the solutions}\label{well}

In this section we  prove the global existence of solutions of system \eqref{conv-diff-1} and  the decay of its solutions in the $L^p(\rr)$-norm with $1\leq p<\infty$. To simplify the presentation we will not make explicit the dependence of the functions on the arguments unless this is necessary. 

\begin{proof}[Proof of Theorem \ref{global}]
The local existence of the solutions is obtained by the classical Banach fix point theorem and we will omit it. The reader can consult \cite{MR2356418} for a similar model. The mass conservation follows by the obseration that $K$ has mass one and $G$ is an odd function. Theses properties guarantee that the two terms in the right hand side of \eqref{conv-diff} vanish when integrated on the whole space $\rr$.

We will now prove that under the smallness assumption of the $L^\infty(\rr)$-norm of the initial data the $L^1(\rr)$ and $L^\infty(\rr)$ norms of the solution  do not increase and in consequence they remain  controlled by the ones of the initial data. The global existence is then trivial.

\medskip

\textbf{Step I. Control of the $L^\infty(\rr)$-norm.}
 In the following we denote by $w^+=\max\{w,0\}$ and $w^-=\min\{w,0\}$ the positive and negative parts of function $w$.

Let us now prove that the $L^\infty(\rr)$-norm of the solution does not increase under the smallness assumption on the initial data.
 Set $m=\|\varphi\|_{L^\infty(\rr)}$. Let us choose $M>m$ where further conditions on $M$ will appear in the proof depending on $\cgk$. 
 Using that  $u\in C([0,T],L^\infty(\rr))$ there exists an interval $0\in I \subset [0,T]$ such that $\|u(t)\|_{L^\infty(\rr)}\leq M$ for $t\in I$. Under this assumption we prove
 that the positive part of $u(t)-m$ satisfies
 \[
  \frac {d}{dt}\int _{\rr} (u(t,x)-m)^+dx\leq 0, \quad \forall \,t\in I
\]
  and hence $u(t)\leq m$ for all $t\in I$.
Let us write $u=m+v$. Then $v(0)\leq 0$. Since $K$ has mass one and  $G$ is an odd function, then $v$ satisfies the following equation
\begin{align}
\label{eq.v}
v_t&=K\ast v-v+G\ast (\frac {m^2+2mv+v^2}4)+(v+m)G\ast \frac{v+m}2 \\
&=K\ast v-v + m G\ast v+ G\ast \frac{v^2}4 + (G\ast v)\frac{v}2.
\nonumber
\end{align}
Multiplying \eqref{eq.v} by $\sgn (v^+)$ and integrating in the space variable  we obtain 
 \begin{align*}
\frac {d}{dt}\int _{\rr} v^+(t,x)dx&=-\int _\rr{v\sgn (v^+)}dx+\int _{\rr}\Big(K\ast v+ m G\ast v+ G\ast \frac{v^2}4 + (G\ast v) \frac v 2\Big)\sgn (v^+)dx\\
&=-\int _{\rr}v^+ dx+I.
\end{align*}
We will prove that the right hand side term is nonpositive. 
 Observe that $I$ can be written as follows
\[
  I=\int _{\rr}v(t,y)\int _{\rr }\sgn(v^+(t,x))\Big[K(x-y)+mG(x-y)+G(x-y)\frac {v(t,y)}4+G(x-y)\frac{v(t,x)}2\Big]dxdy.
\]
Using the assumption on the $L^\infty$-norm of the initial data we get
$-M-m\leq v(t)\leq M-m$ for all $t\in I$ and hence 
\[
\frac m4-\frac {3M}4 \leq  m+ \frac {v(t,y)}4+\frac{v(t,x)}2\leq \frac {3M}4+\frac m4.
\]
Under the assumption that  $m<1/\cgk$ we always can choose $M\leq 1/\cgk$. Thus  we obtain that 
\begin{align*}
\label{}
|G(x-y)|\Big|m+ \frac {v(t,y)}4+\frac{v(t,x)}2 \Big|\leq \cgk K(x-y) ( \frac {3M}4+\frac m4)  \leq K(x-y).
\end{align*}
 It follows that the bracket term in $I$ is nonnegative and 
 $I$ satisfies
\[
  I\leq \iint _{\rr^2}v^+(t,y)\Big[K(x-y)+mG(x-y)+G(x-y)\frac {v^+(t,y)}4+G(x-y)\frac{v^+(t,x)}2\Big]dxdy.
\]
 Since $G$ is an odd function we have for any function $w$ that
\begin{equation}
\label{asimetry}
  \iint _{\rr^2}w(t,y)G(x-y)w(t,x) dxdy=0.
\end{equation}
%
%
We apply this identity with $w=v^+$ and 
using that $K$ has mass one and $G$ has mass zero we obtain
\begin{align*}
 I
 &\leq\int _{\rr}v^+(t,y) \int_{\rr}\Big[K(x-y)+mG(x-y)+G(x-y)\frac{v^+(t,y)}4\Big]dxdy=\int _{\rr}v^+(t,y)dy.
\end{align*}
 This implies that
\[
  \frac {d}{dt}\int _{\rr} v^+(t,x)dx\leq 0
\]
 and $u(t)\leq m$ in $I$. A similar  strategy  works to prove that $u(t)\geq -m$ by denoting $w=-u$, writing the equation for $w$ and applying the same arguments with $G$ replaced by $-G$.

\medskip

\textbf{Step II. Control of the $L^1(\rr)$-norm.}
We first prove that for any interval $I$ such that
  $\|u(t)\|_{L^\infty}\leq m<1/\cgk$ for all $t\in I$ we have
\begin{equation}\label{}
\frac {d}{dt}\int _{\rr}|u(t,x)|dx\leq 0, t\in I.
\end{equation}
Using  equation \eqref{conv-diff-1}  we obtain that
\begin{align}
\label{term.cu.A}
\nonumber \frac{d}{dt}\int _{\rr}|u(t,x)|dx&=-\int _{\rr} |u(t,x)|dx +\int _{\rr}\Big(K\ast u+G\ast \frac{u^2}4 + (G\ast  u )\frac u2 \Big) \sgn(u(t,x))dx\\
&=-\int _{\rr} |u(t,x)|dx +A,
\end{align}
where $A$ is given by
\[
A= \int_{\rr} u(t,y) \int_{\rr} \sgn(u(t,x)) \Big[K(x-y)+\frac  {u(t,y)}4 G(x-y) +\frac { u(t,x)}2G (x-y)\Big]dxdy.
\]
Observe now that under the assumption $\|u(t)\|_{L^\infty}\leq m$ we have
\[
  |G(x-y)|\Big|\frac  {u(t,y)}4  +\frac { u(t,x)}2\Big|\leq\frac {3m}4 \cgk K(x-y)| \leq K(x-y).
\]
This implies that the terms of the above integral are negative unless $u(t,y)$ and $u(t,x)$ have the same sign. We write $A\leq A^+-A^-$
where
\begin{align*}
A^{\pm}= \iint _{\rr^2}  u^\pm(t,y)\Big[ K(x-y)+\frac  {u^\pm(t,y)}4 G(x-y) +\frac { u^\pm(t,x)}2 G (x-y)\Big]dxdy.
\end{align*}
Hence using identity \eqref{asimetry} with $w=u^{\pm}$ and the fact that $K$ and $G$ have mass one and zero respectively,
we get
\[
  A^\pm=\int _{\rr} u^{\pm}(t,y)dy.
\]
This shows that the right hand side in \eqref{term.cu.A} is negative and then 
  $\|u(t)\|_{L^1(\rr)}\leq \|\varphi\|_{L^1(\rr)}$ for all $t\in I$. Repeating the same argument it follows that the same holds for all $t>0$.

It remains to prove that the solutions keep the signum of the initial data. It is sufficient to consider the case when $\varphi\leq 0$. In this case we can choose $m=0$ in \eqref{eq.v} and repeat the same arguments as in Step I. 
The proof is now finished. 
\end{proof}

\medskip

Before proving the decay of the solutions we need the following comparison principle for the solutions of equation \eqref{conv-diff}.
\begin{theorem}\label{max-principle}
For any initial data $u_0,v_0\in L^1(\rr)\cap L^\infty(\rr)$ satisfying 
\begin{equation}\label{asump.infty}
\|u_0\|_{L^\infty(\rr)}< 1/\cgk, \ \|v_0\|_{L^\infty(\rr)}< 1/\cgk, \ u_0\leq v_0,
\end{equation}
the corresponding solutions of equation \eqref{conv-diff} satisfy the comparison principle
$$u(t)\leq v(t), \quad \forall\, t\geq 0.$$
\end{theorem}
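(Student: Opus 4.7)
The plan is to mimic Step~I of the proof of Theorem~\ref{global}. I would set $w=u-v$, derive an evolution equation for $w$ in which the quadratic terms have a symmetric structure, and then multiply by $\sgn(w^+)$ and integrate to obtain $\frac{d}{dt}\int_{\rr} w^+(t,x)\,dx \leq 0$. Since $w^+(0)\equiv 0$ by hypothesis \eqref{asump.infty}, the nonnegative quantity $\int w^+\,dx$ stays zero for all $t\geq 0$, giving $u(t)\leq v(t)$. Theorem~\ref{global} supplies global existence together with the crucial $L^\infty$ bound $\|u(t)\|_{L^\infty(\rr)},\|v(t)\|_{L^\infty(\rr)}\leq\max(\|u_0\|_{L^\infty(\rr)},\|v_0\|_{L^\infty(\rr)})<1/\cgk$ for all $t\geq 0$, so that $s:=u+v$ satisfies $\|s(t)\|_{L^\infty(\rr)}<2/\cgk$.

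To derive the equation for $w$, I would combine $u^2-v^2=sw$ with the symmetric algebraic identity
\[
u(G\ast u)-v(G\ast v)=\tfrac12 s\,(G\ast w)+\tfrac12 w\,(G\ast s),
\]
which, applied to \eqref{conv-diff-1}, yields
\[
w_t=K\ast w-w+G\ast\tfrac{sw}{4}+\tfrac{s}{4}(G\ast w)+\tfrac{w}{4}(G\ast s).
\]
Multiplying by $\sgn(w^+)$ and integrating, the $-w$ term contributes $-\int_{\rr} w^+\,dx$, the three terms that are linear in $w(y)$ assemble into
\[
\iint_{\rr^2}w(y)\sgn(w^+(x))\Big[K(x-y)+G(x-y)\tfrac{s(x)+s(y)}{4}\Big]\,dy\,dx,
\]
and the last term yields $\tfrac14\int_{\rr} w^+(G\ast s)\,dx$. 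The bracket is nonnegative, since the bound $|s(x)+s(y)|/4\leq \|s\|_{L^\infty(\rr)}/2<1/\cgk$ together with \eqref{hyp.1} gives $|G(x-y)|\,|s(x)+s(y)|/4\leq K(x-y)$. This nonnegativity lets me replace $w(y)$ by $w^+(y)$ and drop $\sgn(w^+(x))$; evaluating the resulting integral with $\int K=1$, $\int G=0$, and the antisymmetry identity $\int G(x-y)s(x)\,dx=-(G\ast s)(y)$ produces the upper bound $\int w^+\,dy-\tfrac14\int w^+(G\ast s)\,dy$. The leftover $-\tfrac14 w^+(G\ast s)$ cancels exactly against the $+\tfrac14 w^+(G\ast s)$ coming from $\tfrac{w}{4}(G\ast s)$, so altogether $\frac{d}{dt}\int_{\rr} w^+\,dx\leq 0$.

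The main obstacle I expect is choosing the right decomposition of $u(G\ast u)-v(G\ast v)$: asymmetric splits such as $w(G\ast u)+v(G\ast w)$ do not expose the cancellation, whereas the symmetric average displayed above is tailored so that the residue produced by the antisymmetry of $G$ is exactly cancelled by the $\tfrac{w}{4}(G\ast s)$ term. Once this algebraic reorganization is in place, the remaining ingredients—pointwise domination of the nonlinear coefficient by $K$ via \eqref{hyp.1} and the standard handling of the positive part $w^+$—are essentially a line-by-line adaptation of Step~I in the proof of Theorem~\ref{global}.
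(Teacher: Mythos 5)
Your argument is correct, and it takes a genuinely different route from the paper's. The paper uses the asymmetric split $u(G\ast u)-v(G\ast v)=w(G\ast u)+v(G\ast w)$, which leaves over the terms $\tfrac 12 w(G\ast u)$ and the residue of $\tfrac12 v(G\ast w)$; these do not cancel, they only admit the bound $C\|G\|_{L^1(\rr)}\max\{\|u\|_{L^\infty(\rr)},\|v\|_{L^\infty(\rr)}\}\int_\rr w^+$, so the paper has to introduce the weight $e^{-\alpha t}$ and show $\frac{d}{dt}\bigl(e^{-\alpha t}\int_\rr w^+\bigr)\leq 0$ for $\alpha$ large — a Gronwall absorption rather than a cancellation. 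Your symmetric split $u(G\ast u)-v(G\ast v)=\tfrac12 s(G\ast w)+\tfrac12 w(G\ast s)$ with $s=u+v$ is engineered so that the $-\tfrac14 w^+(G\ast s)$ residue produced by the oddness of $G$ is exactly killed by the $+\tfrac14 w^+(G\ast s)$ term, yielding the genuine monotonicity $\frac{d}{dt}\int_\rr w^+\leq 0$; this is closer in spirit to Step~I of Theorem~\ref{global} and is arguably cleaner, at the cost of needing the slightly different pointwise bound $|G(x-y)|\,|s(x)+s(y)|/4\leq K(x-y)$ (which your hypothesis $\|s\|_{L^\infty(\rr)}<2/\cgk$ delivers, whereas the paper bounds $|u(y)+v(y)|/4+|v(x)|/2$ by $\max\{\|u\|_{L^\infty(\rr)},\|v\|_{L^\infty(\rr)}\}$). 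Both proofs rest on the same inputs — global existence and the non-increase of the $L^\infty$ norms from Theorem~\ref{global}, the domination \eqref{hyp.1}, $\int K=1$, $\int G=0$ — and both conclude $w^+\equiv 0$ from $w^+(0)\equiv 0$; yours additionally shows that $t\mapsto\int_\rr(u-v)^+$ is nonincreasing, i.e.\ an $L^1$ contraction-type statement, which the exponentially weighted argument does not give.
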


%
%

\begin{proof}
Let us consider $u$ and $v$  solutions of \eqref{conv-diff} with initial data $u_0$ and $v_0$.
 In view of Theorem \ref{global} the solutions are global and the $L^\infty(\rr)$-norm  of each one is  less than $1/\cgk$.
We denote $w=u-v$. It follows that $w$ satisfies the equation
\begin{equation}\label{difference}
\left\{
\begin{array}{ll}
w_t=K\ast w-w+ \frac14  G\ast [(u+v)w] +\frac 12 w (G\ast u)+\frac 12v(G\ast w) ,t>0,\\[10pt]
w(0)=u_0-v_0.
\end{array}
\right.
\end{equation}	
Let us choose a positive constant $\alpha$ that will be chosen later. We have the following
\begin{equation}
\label{}
  \frac {d}{dt}\Big(e^{-\alpha t} \int _{\rr} w^+(t,x)dx\Big)=e^{-\alpha t}\Big( -\alpha\int _{\rr}w^+(t,x)dx +\int _{\rr} w_t(t,x)\sgn (w^+(t,x))dx \Big).
\end{equation}
We will prove that for  $\alpha$ large enough, the term in the right hand side  of the above integral is negative and the proof is finished.
Let us remark that 
\[
  \int_\rr (G\ast u) w^+(t,x)dx\leq \|G\|_{L^1(\rr)}\|u\|_{L^\infty(\rr)}\int _{\rr}w^{+}(t,x)dx.
\]
It is then sufficient to estimate the following term
\begin{align*}
\label{}
I&=\int _{\rr} \Big(K\ast w +\frac14  G\ast [(u+v)w]  +\frac 12v(G\ast w)\Big) \sgn (w^+)dx\\
&= \int _{\rr} \int _{\rr}w(y)  \sgn (w^+(x))  \Big(K(x-y)+G(x-y)\frac {u(y)+v(y)}4+G(x-y)\frac{v(x)}2\Big)dxdy.
\end{align*}
We remark that the big term in the integral is nonnegative since
\begin{align*}
\label{}
  |G(x-y)| \Big|\frac{u(y)+v(y)}4+\frac{v(x)}2\Big| &\leq |G(x-y)| \max \{\|u\|_{L^\infty(\rr)}, \|v\|_{L^\infty(\rr)}\}\\
  &\leq |G(x-y)|/ \cgk\leq K(x-y).  
\end{align*}

Thus
\begin{align*}
\label{}
  I&\leq \int _{\rr} \int _{\rr}w^+(y)   \Big(K(x-y)+G(x-y)\frac {u(y)+v(y)}4+G(x-y)\frac{v(x)}2\Big)dxdy\\
  &= \int _{\rr}w^+(y) +\frac 12 \int _{\rr}w^+(y) G(x-y){v(x)}dxdy\\
  &\leq  (1+\frac 12 \|G\|_{L^1(\rr)}\|v\|_{L^\infty(\rr)})\int _{\rr}w^+(y).
\end{align*}
Choosing $\alpha$ large enough   we obtain that $w^+\equiv 0$ so $u\leq v$. The proof is now finished.
\end{proof}

\begin{proof}[Proof of Theorem \ref{decay}.] We consider the case of nonnegative solutions since otherwise we consider $\tilde \varphi =\pm |\varphi|$ and by the comparison principle in Theorem \ref{max-principle} the corresponding solutions satisfy $|u|\leq |\tilde u|$ reducing the problem to the case on nonnegative solutions.

We multiply equation \eqref{conv-diff-1} by $u^{p-1}$, $p\geq2$, and obtain 
\begin{align}
\nonumber \frac 1p\frac{d}{dt}\int _{\rr} u^p(t,x)dx&=\int _{\rr} (K\ast u-u)u^{p-1}+ \frac 14\int _{\rr} (G\ast u^2)u^{p-1}+\frac 12 \int _{\rr} (G\ast u)u^p\\
\label{iden.202}&=-\frac{I_1}{2}+I_2,
\end{align}
where
\begin{equation}
\label{i1}
  I_1=\int _{\rr}\int _{\rr}K(x-y)(u(x)-u(y))(u^{p-1}(x)-u^{p-1}(y))dxdy
\end{equation}
and
\begin{equation}
\label{i2}
  I_2=\int _{\rr}\int _{\rr}G(x-y)\Big(\frac {u^2(y)u^{p-1}(x)}{4}+\frac {u(y)u^{p}(x)}{2}\Big)dxdy. 
\end{equation}

The idea of the proof is that under the smallness assumption on the $L^\infty(\rr)$-norm of the solution $u$ we can prove that $|I_2|\leq  C I_1$ for some constant $C<1/2$. Thus, the right hand side in \eqref{iden.202} can be commpared with $-I_1$. Once this is proven we can follow as in \cite[Section 5]{MR2356418} to prove the decay of the solutions.
 
\begin{lemma}\label{i2i1}
	For any integer $p\geq 2$ the following inequality holds
	$$
  I_2\leq C(p) \cgk I_1,$$
where $C(p)$ is given by 
$$C(p)=
\left\{
\begin{array}{ll}
\frac 14,& p=2,\\[10pt]
	\frac{p}{p+1},& p\geq 3.
\end{array}
\right.
$$
\end{lemma}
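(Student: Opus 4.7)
My approach is to reduce the bound to a pointwise inequality by exploiting the oddness of $G$. Under the substitution $(x,y)\mapsto(y,x)$, $G(x-y)$ changes sign, so rewriting $I_2$ as half of itself plus half of its image under the swap yields
\[
I_2 = \frac{1}{2}\iint_{\rr^2} G(x-y)\,\Psi_p(u(x),u(y))\,dx\,dy, \qquad \Psi_p(a,b) = \frac{b^2 a^{p-1} - a^2 b^{p-1}}{4} + \frac{a^p b - a b^p}{2},
\]
where $\Psi_p$ is antisymmetric in $(a,b)$. Since both summands vanish on the diagonal $a=b$, each factors through $(a-b)$, which is precisely the structure of the integrand $(a-b)(a^{p-1}-b^{p-1})$ of $I_1$ that I want to match.

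For $p=2$, the antisymmetrization collapses to $\Psi_2(a,b) = ab(a-b)/4$, so $I_2 = \frac{1}{8}\iint G(x-y)\,u(x)u(y)(u(x)-u(y))\,dx\,dy$. Here I would use the algebraic identity $ab(a-b) = \frac{1}{3}\bigl[a^3 - b^3 - (a-b)^3\bigr]$ to split $I_2$ into three pieces. The two pieces coming from $a^3-b^3$ vanish after integration: by Fubini each factors as $\int u^3(x)\,dx \cdot \int G$, and $\int G = 0$ because $G$ is odd and integrable. What remains is $-\frac{1}{24}\iint G(x-y)(u(x)-u(y))^3 dx\,dy$. Bounding $|G|\le \cgk K$ and using $|u(x)-u(y)|^3 \le \|u\|_{L^\infty(\rr)}(u(x)-u(y))^2$ (valid because $u\ge 0$ by the reduction at the start of the proof of Theorem \ref{decay}) delivers $|I_2|\le \frac{\cgk\|u\|_{L^\infty(\rr)}}{24}I_1$; once the standing smallness hypothesis on $\|u\|_{L^\infty(\rr)}$ is invoked, this is sharper than the claimed constant $C(2)=1/4$.

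For $p\ge 3$ I would factor $(a-b)$ out of each summand of $\Psi_p$ using $a^k - b^k = (a-b)\sum_{j=0}^{k-1}a^j b^{k-1-j}$, and compare term by term against the factorization $(a-b)(a^{p-1}-b^{p-1}) = (a-b)^2\sum_{j=0}^{p-2}a^j b^{p-2-j}$ of the integrand of $I_1$. After $|G|\le \cgk K$, the problem reduces to a pointwise inequality between symmetric polynomials in $a,b\ge 0$, which I would close by absorbing the extra monomial weights $ab$ and $a^2 b^2$ via $a,b\le \|u\|_{L^\infty(\rr)}$.

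The main obstacle is producing the sharp constant $C(p)=p/(p+1)$ rather than a coarser bound. Its form strongly suggests that the right route is a mean-value comparison between $(a^{p+1}-b^{p+1})/(p+1)$ and $(a^p-b^p)/p$ on $[b,a]$, or equivalently a weighted monomial inequality aligned with the two summands of $\Psi_p$; the $p=2$ trick of collapsing the symmetrized integrand to a pure $(u(x)-u(y))^3$ does not generalize cleanly, so both antisymmetric pieces must be tracked simultaneously. Once this pointwise inequality is in hand, the rest of the proof is bookkeeping using $|G|\le \cgk K$ and the standing $L^\infty$ bound on $u$.
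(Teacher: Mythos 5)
Your $p=2$ argument is correct and in fact yields a sharper bound than the paper's. The paper also antisymmetrizes (reducing $I_2$ to $\frac14\iint G(x-y)u(y)u^2(x)\,dxdy$), but it only adds null terms of the special form $\alpha u^3(y)+\beta u^3(x)$, arranges a double zero at $z=1$ of the homogenized polynomial, and ends with the constant $\frac14\cgk\|u\|_{L^\infty(\rr)}$. Your identity $ab(a-b)=\frac13\bigl[a^3-b^3-(a-b)^3\bigr]$ amounts to subtracting, in addition, the symmetric quantity $\frac18(a-b)^2(a+b)$ (null against the antisymmetric kernel $G(x-y)$), which collapses the integrand to $-\frac1{24}(u(x)-u(y))^3$ and gives $|I_2|\le\frac1{24}\cgk\|u\|_{L^\infty(\rr)}I_1$. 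That part is fine.

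The case $p\ge3$, however, contains a genuine gap, and it is not merely the matter of hitting the sharp constant $p/(p+1)$ that you flag. Writing $\Psi_p(a,b)=\frac{a^2b^2}{4}(a^{p-3}-b^{p-3})+\frac{ab}{2}(a^{p-1}-b^{p-1})$, one sees that $\Psi_p$ vanishes only to \emph{first} order on the diagonal $a=b$, whereas the comparison quantity $(a-b)(a^{p-1}-b^{p-1})\max\{a,b\}=(a-b)^2\bigl(\sum_j a^jb^{p-2-j}\bigr)\max\{a,b\}$ vanishes to \emph{second} order. Taking $a=1$, $b=1-\epsilon$ shows the ratio blows up like $1/\epsilon$, so the pointwise inequality you propose to prove is false for every finite constant, sharp or coarse; the term-by-term comparison after factoring out a single $(a-b)$ cannot close. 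The missing idea --- which is exactly what your $p=2$ trick supplies in disguise --- is to first modify the integrand by quantities that integrate to zero against $G(x-y)$ so as to force second-order vanishing on the diagonal. The paper does this by inserting $\alpha u^{p+1}(y)+\beta u^{p+1}(x)$ (null since $\int_\rr G=0$), homogenizing to $f(z)=\alpha z^{p+1}+\frac{z^2}4+\frac z2+\beta$, and imposing $f(1)=f'(1)=0$, i.e. $\alpha=-1/(p+1)$ and $\beta=-3/4+1/(p+1)$; only after this normalization does an inequality of the form $|f(z)|\le C(p)(z-1)(z^{p-1}-1)\max\{z,1\}$ become possible, and it is then proved by dividing both sides by $(z-1)^2$ and comparing the resulting polynomials. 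Your closing remark about a mean-value comparison with $(a^{p+1}-b^{p+1})/(p+1)$ points toward this normalization, but since you neither carry it out nor prove the resulting one-variable inequality, the $p\ge3$ case of the lemma is not established by your argument.
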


%
%
%

Using the above Lemma we obtain
 \begin{align*}
\frac 1p\frac{d}{dt}\int _{\rr} u^p(t,x)dx&\leq -\Big(\frac 12-C(p)\cgk\|u_0\|_{L^\infty(\rr)}\Big)I_1
\end{align*}
When $p=2$ it follows that under the assumption that   $\|u_0\|_{L^\infty(\rr)}\leq 1/\cgk$ the solution satisfies 
\begin{equation}
\label{h1}
\frac 12\frac{d}{dt}\int _{\rr} u^2(t,x)dx\leq -\frac 14 \int_{\rr} \int_{\rr}K(x-y)( u(x) - u(y))^2dxdy .
\end{equation}
When $p\geq 3$ we need to assume that $\|u_0\|_{L^\infty(\rr)}\leq 1/2\cgk$ in order to obtain that
\[
\frac 1p\frac{d}{dt}\int _{\rr} u^p(t,x)dx\leq -CI_1 \]
for some positive constant $C=C(p,\cgk,\|u_0\|_{L^\infty(\rr)})$.

Proceeding now as in \cite[Section 5]{MR2356418} we obtain the desired decay property of the solutions for any $p\in [2,\infty)$. When $p\in [1,2)$ we obtain the same result by  interpolating the case $p=2$ with the estimate on the $L^1(\rr)$-norm of the solutions obtained in Theorem \ref{global}.
The proof is now finished.
\end{proof}

\begin{proof}[Proof of Lemma \ref{i2i1}]
We first consider the case $p=2$. In this case, the structure of $I_2$ and the fact that $G$ is an odd function show that after a change of variables we have 
\[
  I_2=\frac 14\int _{\rr}\int _{\rr}G(x-y) {u(y)u^{2}(x)}dxdy=  \int _{\rr}\int _{\rr}G(x-y)\Big(\alpha u^{3}(y)+\frac{u(y)u^{2}(x)}{4}+\beta u^{3}(x)\Big)dxdy, 
\]
for any real numbers $\alpha$ and $\beta$.
We now choose $\alpha$ and $\beta$ such that  polynomial $P(z)=\alpha z^3+z/4+\beta$ has a zero of second order at $z=1$. This means  $\alpha=-1/12$
and $\beta=-1/6$ and $P(z)=(z-1)^2(z-2)/12$. Thus
\[
  |P(z)|\leq \frac 14(z-1)^2 \max \{z,1\}, \quad \forall z\geq 0. 
\]
Using this inequality we have that $I_2$ satisfies
\begin{align*}
\label{}
  |I_2|&\leq \frac{\cgk}4  \int _{\rr}\int _{\rr} K(x-y)(u(x)-u(y))^2 \max\{u(x),u(y)\}dxdy\leq  \frac{\cgk}4  \|u\|_{L^\infty(\rr)} I_1.
\end{align*}

 Let us now consider the case $p\geq 3$. In this case we cannot use the symmetry of the two terms that appear inside the integral in $I_2$ and we have to take care of both of them. 
Using  that 
$G$ is an odd function  for any real numbers $\alpha$ and $\beta$ we can write $I_2$ as
\begin{equation}
\label{i22}
  I_2=\int _{\rr}\int _{\rr}G(x-y)\Big(\alpha u^{p+1}(y)+\frac{u^2(y)u^{p-1}(x)}{4}+\frac{u(y)u^{p}(x)}{2}+\beta u^{p+1}(x)\Big)dxdy. 
\end{equation}

We are looking for $\alpha$ and $\beta$ such that for some constant $C(p)$ the following inequality holds 
\begin{equation}\label{ineg.1}
\Big |\alpha z^{p+1} + \frac {z^2}4+\frac z2+\beta \Big|\leq C(p) (z-1)(z^{p-1}-1)\max\{z,1\}, \quad \forall \, z>0.
\end{equation}
 Let us choose $\alpha$ and $\beta$ in such a way that 
$f(z)=\alpha z^{p+1} + \frac {z^2}4+\frac z2+\beta$
has a zero of second order at $z=1$: $f(1)=f'(1)=0$, i.e. $\alpha +\beta=-3/4$, $(p+1)\alpha +1=0$. It gives us that $\alpha=-1/(p+1)$ and $\beta=-3/4+\frac 1{p+1}$. Back to inequality \eqref{ineg.1} we claim that we can choose  $C=p/(p+1)$.
Using   inequality \eqref{ineg.1} we find that
\begin{align}\label{I2}
|I_2|&\leq  \frac{p}{p+1} \int_{\rr}\int_{\rr} |G(x-y)|( u(y) - u(y))(u^{p-1}(x)-u^{p-1}(y))\max\{ u(x),u(y)\}dxdy\\
\nonumber &\leq \frac{p}{p+1} \cgk  \|u_0\|_{L^\infty(\rr)} I_1.
\end{align}

It remains to prove inequality \eqref{ineg.1}.  Since both terms in \eqref{ineg.1} have $z=1$ as root with multiplicity two we can divide them by $(z-1)^2$ and after explicit computations it remains to prove that
\[
Q(z)=z^{p-1}+2z^{p-2}+\dots +(p-1)z+\frac{3p-1}4\leq (p-1) R(z)\max\{z,1\}, \quad\forall z\geq 0. 
\]
where $R(z)=(z^{p-2}+\dots +z+1)$.
Since $p\geq 3$ we have that $(3p-1)/4\leq p-1$ and 
\begin{align*}
\label{}
Q(z)&=zR(z)+z^{p-2}+\dots+(p-2)z+\frac{3p-1}4 \leq z R(z)+ z^{p-2}+\dots+(p-2)z+p-1   \\
&\leq zR(z)+ (p-1)R(z).
\end{align*}

The proof is now complete.
\end{proof}

\section{Asymptotic expansion of the solutions}\label{expansion}

In this section we prove Theorem \ref{asimp}.
We introduce the family of rescaled solutions
 $u_\lambda(x,t)=\lambda u(\lambda x,\lambda^2 t).$ 
 Each $u_\lambda$ satisfies the following equation
\begin{equation}\label{rescal}
\left\{
\begin{array}{ll}
u_{\lambda,t}=\lambda^2(K_\lambda\ast u_\lambda-u_\lambda)+\lambda  \int _{\rr}G_\la (x-y)\Big(\frac {u_\la(t,x)+u_\la(t,y)}2\Big)^2,& t>0,x\in \rr,\\[10pt]
u_\lambda(0)=\varphi_\lambda(x), &x\in \rr.
\end{array}
\right.
\end{equation}
where $K_\lambda$ and $G_\lambda$ are the rescaled kernels:
\begin{equation}\label{kernels}
K_{\lambda}(x)=\lambda K(\la x), \quad G_\la(x)=\la G(\la x).
\end{equation}

%
%
%

We will divide the proof of Theorem \ref{asimp} in various steps. 
We first obtain uniform estimates for our rescaled solutions. They are going in the same direction as the ones in \cite{denisa} but we have to take care of the term $T$ introduced in the first section. This term has no sign property and should be absorbed in the diffusive part.

{\bf Step I. Uniform estimates on the rescaled solutions}. In the following Lemmas we obtain  uniform estimates on the family $\{u_\lambda\}_{\lambda>0}$.
\begin{lemma}\label{est.2.compac}
There exists a positive constant  $C=C(\|\varphi\|_{L^1(\rr)},\|\varphi\|_{L^\infty(\rr)})$ such that 
 the following hold uniformly on $\lambda>0$:
\begin{equation}
\label{est.norme.p}
  \|u_\lambda(t)\|_{L^p(\rr)}\leq C t^{-\frac{1}{2}(1-\frac1p)}, \quad \forall 1\leq p<\infty, \, \forall t>0,
\end{equation}
\begin{equation}\label{a1,a2}
\lambda^2 \int_{t_ 1}^{t_2}\int_\rr\int_\rr K_\lambda(x-y)(u_\lambda(t,y)-u_\lambda(t,x))^2dydxdt\leq C t_1^{-\frac 12}, \quad \forall \, 0<t_1<t_2<\infty.
\end{equation}
\end{lemma}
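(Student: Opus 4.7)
The plan is that both estimates are essentially consequences, via scaling, of the results already proved for the original solution $u$ in Theorem \ref{decay} and its proof. So my main work is careful bookkeeping of the scaling factors rather than any new analysis.

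First I would record the basic scaling identity. Since $u_\lambda(t,x)=\lambda u(\lambda^2 t,\lambda x)$, a change of variable gives
$\|u_\lambda(t)\|_{L^p(\rr)}=\lambda^{1-1/p}\|u(\lambda^2 t)\|_{L^p(\rr)}$ for every $1\leq p<\infty$.
The hypothesis $\|\varphi\|_{L^\infty}\leq 1/(2\cgk)$ places us in the regime of Theorem \ref{decay}, so
$\|u(\lambda^2 t)\|_{L^p(\rr)}\leq C(\|\varphi\|_{L^1},\|\varphi\|_{L^\infty})(\lambda^2 t)^{-\frac12(1-\frac1p)}$.
Multiplying by $\lambda^{1-1/p}$ the $\lambda$-powers cancel and estimate \eqref{est.norme.p} drops out, uniformly in $\lambda>0$. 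For $p=\infty$ the same computation gives $\|u_\lambda(t)\|_{L^\infty(\rr)}\leq\|\varphi\|_{L^\infty(\rr)}$, which is useful later on.

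For the dissipation estimate \eqref{a1,a2} I would start from the differential inequality \eqref{h1} proved along the way in Theorem \ref{decay}, namely
\[
\frac{d}{ds}\|u(s)\|_{L^2(\rr)}^2\leq -\frac12\int_\rr\int_\rr K(x-y)(u(s,x)-u(s,y))^2dx\,dy .
\]
Integrating from $s=\lambda^2 t_1$ to $s=\lambda^2 t_2$ and using $\|u(\lambda^2 t_1)\|_{L^2}^2\leq C(\lambda^2 t_1)^{-1/2}$ from \eqref{decay.2}, I obtain
\[
\int_{\lambda^2 t_1}^{\lambda^2 t_2}\!\!\int_\rr\!\!\int_\rr K(x-y)(u(s,x)-u(s,y))^2dx\,dy\,ds\leq 2C(\lambda^2 t_1)^{-1/2}.
\]
Next I perform the changes of variable $x=\lambda\tilde x$, $y=\lambda\tilde y$, $t=\lambda^{-2}s$ inside the quantity $\lambda^2\int_{t_1}^{t_2}\!\!\iint K_\lambda(x-y)(u_\lambda(t,y)-u_\lambda(t,x))^2dy\,dx\,dt$. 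Using $K_\lambda(x)=\lambda K(\lambda x)$ and $u_\lambda(t,x)=\lambda u(\lambda^2 t,\lambda x)$, the Jacobian $\lambda^{-2}$ from the space change and $\lambda^{-2}$ from the time change combine with the factors $\lambda\cdot\lambda^2\cdot\lambda^2$ coming from $K_\lambda$ and the two copies of $u_\lambda$ to leave an overall prefactor $\lambda$, so the quantity equals $\lambda$ times the integral just estimated. Substituting the bound then yields $2C\lambda\cdot(\lambda^2 t_1)^{-1/2}=2C t_1^{-1/2}$, which is \eqref{a1,a2}.

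The only point requiring any care is checking that the dissipation inequality \eqref{h1} is genuinely available under the hypothesis $\|\varphi\|_{L^\infty}\leq 1/(2\cgk)$ (it is, since the proof of Theorem \ref{decay} established it under the weaker condition $\|\varphi\|_{L^\infty}<1/\cgk$ via Lemma \ref{i2i1} with $p=2$), and that the powers of $\lambda$ introduced by rescaling the kernel, the function, the measure and the time all cancel to produce a constant independent of $\lambda$. No new ideas are needed beyond those already developed in Section \ref{well}.
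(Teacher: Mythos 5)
Your proposal is correct and follows essentially the same route as the paper: estimate \eqref{est.norme.p} by rescaling \eqref{decay.p}, and estimate \eqref{a1,a2} by integrating \eqref{h1} in time, changing variables to identify the rescaled dissipation with $\lambda$ times the original one, and invoking \eqref{decay.2}. The bookkeeping of the powers of $\lambda$ is accurate, so nothing further is needed.
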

\begin{proof}
The proof of the first part uses the definition of the rescaled family $u_\lambda$ and estimate \eqref{decay.p}.
Using identity \eqref{h1}  we obtain that for any $0<t_1<t_2<\infty$ the following holds:
\begin{equation}\label{t1,t2}
\int^{t_ 2}_{t_1}\int_\rr\int_\rr K(x-y)(u(t,y)-u(t,x))^2dydxdt\lesssim  \|u(t_1)\|_{L^2(\rr)}^2.
\end{equation}
Let us now consider the same quantity for the rescaled solution. 
After a change of variables and  using estimate \eqref{decay.2} we have that
\begin{align*}
\lambda^2&\int^{t_ 2}_{t_1}\int_\rr\int_\rr K_\lambda(x-y)(u_\lambda(t,y)-u_\lambda(t,x))^2dydxdt\\
&=\lambda \int_{\la ^2 t_ 1}^{\la ^2 t_2}\int_\rr\int_\rr K(x-y)(u(t,y)-u(t,x))^2dydxdt\leq \la \|u(\la^2 t_1)\|_{L^2(\rr)}^2\leq C t_1^{-1/2}.
\end{align*}
The proof is finished.
\end{proof}

\begin{lemma}\label{energy}
For any $0<t_1<t_2<\infty$  there exists a positive constant $M(t_1)$ such that  for all $\lambda>1$ the following holds
\begin{equation}\label{normL2}
\|u_{\lambda,t}\|_{L^2((t_1,t_2),H^{-1}(\rr))}\leq M(t_1).
\end{equation}
\end{lemma}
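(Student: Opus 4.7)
The plan is to estimate $\|u_{\lambda,t}(t)\|_{H^{-1}(\rr)}$ pointwise in $t$ by duality against test functions $\psi\in H^1(\rr)$ and then integrate in $t$. Using the equivalent form \eqref{conv-diff-1} of the nonlinearity at the rescaled level, the pairing decomposes into three summands,
\[
\langle u_{\lambda,t},\psi\rangle=\lambda^2\int_\rr(K_\lambda\ast u_\lambda-u_\lambda)\psi\,dx+\frac{\lambda}{4}\int_\rr(G_\lambda\ast u_\lambda^2)\psi\,dx+\frac{\lambda}{2}\int_\rr(G_\lambda\ast u_\lambda)\,u_\lambda\,\psi\,dx,
\]
each of which I will bound in $L^2((t_1,t_2))$ by $C(t_1)\|\psi\|_{H^1(\rr)}$ with $C(t_1)$ independent of $\lambda\ge 1$.

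For the diffusive summand I would first symmetrize in $x,y$, using the evenness of $K$, to rewrite it as $-\frac{\lambda^2}{2}\iint K_\lambda(x-y)(u_\lambda(x)-u_\lambda(y))(\psi(x)-\psi(y))\,dx\,dy$. A pointwise Cauchy-Schwarz split, together with the elementary identity
\[
\iint K_\lambda(x-y)(\psi(x)-\psi(y))^2\,dx\,dy\le \|\psi'\|_{L^2(\rr)}^2\int_\rr z^2 K_\lambda(z)\,dz=\frac{2A}{\lambda^2}\|\psi'\|_{L^2(\rr)}^2,
\]
then cancels the $\lambda^2$ prefactor and reduces matters to estimating $\lambda^2\iint K_\lambda(x-y)(u_\lambda(x)-u_\lambda(y))^2$ in $L^1((t_1,t_2))$, which is exactly \eqref{a1,a2}.

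The two convective summands are the delicate part, because the bound $\|u_\lambda(t)\|_{L^\infty(\rr)}\le\lambda\|\varphi\|_{L^\infty}$ is not uniform in $\lambda$. The key structural fact is that $G$ has zero mass, so for any $w$,
\[
(G_\lambda\ast w)(x)=\int_\rr G_\lambda(x-y)(w(y)-w(x))\,dy,
\]
which combined with the same Cauchy-Schwarz trick and with the pointwise domination $|G|\le \cgk K$ from \eqref{hyp.1} yields the two auxiliary estimates $\|G_\lambda\ast\psi\|_{L^2(\rr)}\le C\lambda^{-1}\|\psi'\|_{L^2(\rr)}$ and
\[
\|G_\lambda\ast u_\lambda\|_{L^2(\rr)}^2\le \cgk\|G\|_{L^1(\rr)}\iint K_\lambda(x-y)(u_\lambda(x)-u_\lambda(y))^2\,dx\,dy.
\]
For the first convective summand I would transfer the convolution onto $\psi$ via $\int(G_\lambda\ast f)g\,dx=-\int f\,(G_\lambda\ast g)\,dx$ (valid because $G$ is odd), estimate $\frac{\lambda}{4}\,|\int u_\lambda^2(G_\lambda\ast\psi)\,dx|\le\frac{\lambda}{4}\|u_\lambda\|_{L^4(\rr)}^2\|G_\lambda\ast\psi\|_{L^2(\rr)}$, and use the first auxiliary bound to absorb the $\lambda$ prefactor, leaving $\|u_\lambda\|_{L^4(\rr)}^4\in L^1((t_1,t_2))$ by \eqref{est.norme.p}. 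For the second convective summand I would use $|\int u_\lambda\psi(G_\lambda\ast u_\lambda)\,dx|\le\|u_\lambda\|_{L^2}\|\psi\|_{L^\infty}\|G_\lambda\ast u_\lambda\|_{L^2}$ with the Sobolev embedding $H^1(\rr)\hookrightarrow L^\infty(\rr)$, and close the bound by controlling $\lambda^2\int_{t_1}^{t_2}\|G_\lambda\ast u_\lambda\|_{L^2(\rr)}^2\,dt$ through the second auxiliary estimate and \eqref{a1,a2}.

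The main obstacle is precisely this last convective term, which carries both the $\lambda$-prefactor from the scaling and a factor $u_\lambda$ whose sup norm is of order $\lambda$; no direct bound can work. Only the coincidence of the two structural assumptions -- the zero mass of $G$ producing the commutator representation above, and the pointwise control \eqref{hyp.1} collapsing the $G_\lambda$-energy onto the $K_\lambda$-Dirichlet form -- yields the required cancellation. Once the three summands are bounded in $L^2((t_1,t_2);H^{-1}(\rr))$ by constants depending only on negative powers of $t_1$ and on the $L^1\cap L^\infty$ norms of $\varphi$, the uniform bound $M(t_1)$ follows.
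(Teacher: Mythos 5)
Your proposal is correct, and the diffusive term is handled exactly as in the paper (symmetrize, Cauchy--Schwarz against the $K_\lambda$-Dirichlet form, use \eqref{ineg-1} to absorb the $\lambda^2$ into $\|\Phi'\|_{L^2}$, then invoke \eqref{a1,a2}). Where you diverge is the convective part. The paper keeps the nonlinearity in the symmetric form $\big(\tfrac{u_\lambda(x)+u_\lambda(y)}{2}\big)^2$, uses the oddness of $G$ to pair it with $\Phi(x)-\Phi(y)$, and then applies a single Cauchy--Schwarz together with $|G|\leq \cgk K$: the factor $\lambda^2$ lands entirely on the test-function differences via \eqref{ineg-1}, and the remaining quartic integral is bounded by $\|u_\lambda(t)\|_{L^4(\rr)}^4\lesssim t^{-3/2}$, so only the $L^4$ decay \eqref{est.norme.p} is needed for this term. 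You instead expand into $\tfrac14 G_\lambda\ast u_\lambda^2$ and $\tfrac12 u_\lambda (G_\lambda\ast u_\lambda)$ and exploit the zero mass of $G$ to write $G_\lambda\ast w$ as an integral of differences, gaining the factor $\lambda^{-1}$ once on $\psi$ (for the first piece) and once on $u_\lambda$ itself (for the second piece, which then requires \eqref{a1,a2} a second time, together with the uniform $L^2$ bound $\|u_\lambda(t)\|_{L^2}\lesssim t_1^{-1/4}$ and the embedding $H^1(\rr)\hookrightarrow L^\infty(\rr)$). Both routes close with constants depending only on $t_1$ and the $L^1\cap L^\infty$ norms of $\varphi$; the paper's single symmetrization is marginally more economical, since it never needs \eqref{a1,a2} or $\|\psi\|_{L^\infty}$ for the convective contribution, whereas your version makes explicit and separately visible the two structural inputs (zero mass of $G$ and the domination \eqref{hyp.1}) that produce the cancellation of the dangerous $\lambda$ prefactor.
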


\begin{proof}

Let be a function $\Phi\in C^2_c(\rr)$ with $\|\Phi\|_{H^1(\rr)}\leq 1$. Multiplying  equation \eqref{rescal} with $\Phi$, integrating over $\rr$ and making a change of variables we obtain:
\begin{align*}
\int_\rr u_{\lambda,t}(x,t)\Phi(x)dx
&=-\frac{ \lambda^2}2\int_\rr\int_\rr  K_\lambda(x-y) (u_\lambda(t,y)-u_\lambda(t,x)(\Phi(y)-\Phi(x))dxdy\\
& \quad +\frac\lambda2\int_\rr   \int _{\rr}G_\la (x-y)\Big(\frac {u_\la(t,x)+u_\la(t,y)}2\Big)^2(\Phi(x)-\Phi(y))dxdy\\
&=\frac {A_\lambda}2+\frac {B_\lambda}8.
\end{align*}

We now estimate the two terms $A_\lambda$ and $B_\lambda$.
 Let us recall that for any $K\in L^1(\rr,|x|^2)$ and $v\in H^1(\rr)$  the following estimate (see \cite[Lemma 2.3]{denisa}) holds for all $\lambda>0$
\begin{equation}\label{ineg-1}
\lambda^2\int _{\rr} \int _{\rr} K_\lambda (x-y)(v(x)-v(y))^2dxdy\leq \int _{\rr} K(x)|x|^2dx \int _{\rr} | v'(x)|^2dx.
\end{equation}

Using
Cauchy's inequality and inequality \eqref{ineg-1} we obtain:
\begin{align*}
|A_\lambda|^2&\leq\left[ {\lambda ^2}\int_\rr\int_\rr K_\lambda(x-y)|\Phi(y)-\Phi(x)|^2dxdy\right]\left[ {\lambda^2} \int_\rr\int_\rr K_\lambda(x-y)|u_\lambda(t,y)-u_\lambda(t,x)|^2dxdy\right]\\
&\lesssim  \|\Phi'\|_{L^2(\rr)} {\lambda^2} \int_\rr\int_\rr K_\lambda(x-y)|u_\lambda(t,y)-u_\lambda(t,x)|^2dxdy.
\end{align*}
We now consider the term $B_\lambda.$  We  use that $|G|\leq \cgk K$, Cauchy's inequality and the decay of the $L^4(\rr)$-norm of $u_\lambda$ in \eqref{est.norme.p} to obtain
\begin{align*}
  |B_\lambda|^2&\lesssim \left[\lambda ^2 \int_\rr\int_\rr K_\lambda(x-y)|\Phi(y)-\Phi(x)|^2dxdy\right]\left[ \int_\rr\int_\rr K_\lambda(x-y)|u_\lambda(t,y)-u_\lambda(t,x)|^4dydx\right]\\
  &\lesssim  \|\Phi'\|_{L^2(\rr)}\int_\rr\int_\rr K_\lambda(x-y)(u_\lambda^4(t,y)+u_\lambda^4(t,x))dxdy\\
  &\lesssim   \|\Phi'\|_{L^2(\rr)} \|u_\la(t)\|_{L^4(\rr)}^4\leq t_1^{-3/2}   \|\Phi'\|_{L^2(\rr)} .
\end{align*}
The two estimates on $A_\lambda$ and $B_\lambda$ show that 
\[
  \|u_{\lambda,t}(t)\|_{H^1(\rr)}^2\lesssim {\lambda^2} \int_\rr\int_\rr K_\lambda(x-y)|u_\lambda(t,y)-u_\lambda(t,x)|^2dydx+t_1^{-3/2}.
\]
Integrating the above inequality over $[t_1,t_2]$ and  using \eqref{a1,a2} we obtain the desired estimate.

\end{proof}

{\bf Step II. Compactness of the trajectories $\{u_\lambda\}$  in $L^1_{loc}((0,\infty),L^1(\rr))$.}
Using estimates \eqref{est.norme.p} and \eqref{normL2} we obtain that there exists a function $U\in L^\infty_{loc}((0,\infty), L^p(\rr))$ such that, up to a subsequence, $u_\lambda(t)\rightharpoonup U(t)$
in $L^p_{loc}(\rr)$ for all $t>0$ and for all $1<p<\infty$. For the full details we refer to \cite[Section 2.2, Step II]{denisa}.
Estimate \eqref{est.norme.p} transfers to $U$:
  \begin{equation}
\label{est.U}
\|U(t)\|_{L^p(\rr)}\leq C t^{-\frac 12(1-\frac 1p)}, \quad \forall t>0, \ \forall 1<p<\infty.  
\end{equation}

Let us now prove the  convergence of $u_\lambda$ toward $U$ in 
$L^2_{loc}((0,\infty)\times \rr)$. We will use the following compactness argument for nonlocal evolution problems that has been proved in \cite{MR3190994} (see also \cite{denisa} for a more general result). 
\begin{theorem}(\cite[Th. 5.1]{denisa})\label{compac}
Let $\rho:\rr\rightarrow\rr$ be a nonnegative smooth even function with compact support, such that $\rho(x)\geq \rho(y)$ for all $|x|\leq |y|$ and $\rho_n(x)=n\rho(nx)$. 
Assume that
 $\{f_n\}_{n\geq1}$ is a sequence in $L^2((0,T), L^2(\rr))$ such that for some positive constant $M$ the following hold:\\
1) $ \|f_n\|_{L^2((0,T)\times\rr)}\leq M,$\\
2) $n^2\int^T_0\int_{\rr}\int_{\rr} \rho_n(x-y)(f_n(y,t)-f_n(x,t))^2dydxdt\leq M$, for all $n\geq 1$\\
and\\
3) $\|\partial_t f_n (t)\|_{L^2((0,T);H^{-1}(\rr))}\leq M$ for all $n\geq 1$.\\
Then there exists a function  $f\in L^2((0,T);H^1(\rr))$ such that, up to a subsequence, $f_n\to f$  in $L^2_{loc}((0,T)\times\rr).$
\end{theorem}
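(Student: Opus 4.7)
The statement is an Aubin--Lions--Simon type compactness result adapted to a nonlocal framework: the three hypotheses play the roles of (1) uniform $L^2$-boundedness, (2) a nonlocal substitute for $L^2((0,T);H^1)$ regularity (since $n^2\rho_n$ is the natural scaling of a second-difference kernel that converges to a derivative), and (3) temporal regularity in $H^{-1}$. The plan is to smooth $f_n$ in the spatial variable, apply the classical Aubin--Lions lemma to the smoothed family, and then recover compactness of the original sequence by showing that the smoothing error vanishes as the mollification parameter tends to zero, \emph{uniformly in} $n$.

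\textbf{Step 1 (spatial equicontinuity in $L^2$, uniform in $n$).} For $\varepsilon>0$, let $J_\varepsilon$ be a standard mollifier and set $f_n^\varepsilon:=J_\varepsilon\ast_x f_n$. The key estimate to prove is
\[
\sup_{n\ge 1}\|f_n-f_n^\varepsilon\|_{L^2((0,T)\times\rr)} \xrightarrow[\varepsilon\to 0]{}0.
\]
After the change of variables $w=n(y-x)$, hypothesis (2) reads
\[
\int_0^T\int_{\rr}\rho(w)\,n^2\|f_n(\cdot+w/n,t)-f_n(\cdot,t)\|_{L^2(\rr)}^2\,dw\,dt\le M,
\]
which is exactly control on spatial translations at the microscopic scale $1/n$. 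To turn this into a genuine $L^2$-modulus of continuity $\|f_n(\cdot+h)-f_n\|_{L^2}$ uniform in $n$, I would split $h\in\rr$ into the regimes $|h|\lesssim 1/n$ and $|h|\gtrsim 1/n$: in the former, one uses the positivity of $\rho$ on a neighborhood of $0$ (guaranteed by the monotonicity assumption $\rho(x)\ge\rho(y)$ for $|x|\le|y|$) to recover pointwise control in the variable $h$ by an averaging/telescoping argument exploiting translation invariance; in the latter, one decomposes $h$ into $O(n|h|)$ micro-steps of length $1/n$ and sums by the triangle inequality. Combined with the uniform $L^2$-bound from (1), this yields the desired uniform $L^2$-equicontinuity.

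\textbf{Step 2 (classical Aubin--Lions for the mollified family).} For each fixed $\varepsilon>0$, the sequence $\{f_n^\varepsilon\}_n$ satisfies: (i) uniform bounds in $L^2((0,T);H^k(\rr))$ for every $k\ge 0$, from (1) and the smoothness of $J_\varepsilon$; (ii) $\partial_t f_n^\varepsilon = J_\varepsilon\ast_x \partial_t f_n$ is uniformly bounded in $L^2((0,T);L^2(\rr))$ since $J_\varepsilon:H^{-1}\to L^2$ continuously and (3) holds. On any bounded $\Omega\subset\rr$, the classical Aubin--Lions--Simon lemma applied with the compact embedding $H^1(\Omega)\hookrightarrow L^2(\Omega)\hookrightarrow H^{-1}(\Omega)$ yields precompactness of $\{f_n^\varepsilon\}_n$ in $L^2((0,T)\times\Omega)$.

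\textbf{Step 3 (diagonal extraction and identification of the limit).} Taking $\varepsilon_k=1/k$, extract iteratively convergent subsequences and diagonalize to obtain $\{f_{n_j}\}$ such that $\{f_{n_j}^{\varepsilon_k}\}_j$ is Cauchy in $L^2_{loc}((0,T)\times\rr)$ for every $k$. The triangle inequality
\[
\|f_{n_i}-f_{n_j}\|_{L^2_{loc}}\le \|f_{n_i}-f_{n_i}^{\varepsilon_k}\|_{L^2}+\|f_{n_i}^{\varepsilon_k}-f_{n_j}^{\varepsilon_k}\|_{L^2_{loc}}+\|f_{n_j}^{\varepsilon_k}-f_{n_j}\|_{L^2},
\]
combined with Step 1 (to bound the outer terms by choosing $k$ large) and Step 2 (for the middle term with $i,j$ large), shows that $\{f_{n_j}\}$ is Cauchy in $L^2_{loc}((0,T)\times\rr)$. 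The main obstacle is Step 1: passing from an averaged bound at the microscopic scale $1/n$ to a uniform $L^2$-equicontinuity at all scales. The limit function belongs to $L^2((0,T);H^1(\rr))$ by combining lower semicontinuity of the nonlocal seminorm in (2) with the Bourgain--Brezis--Mironescu--Ponce characterization of $H^1$ through such nonlocal energies, which identifies the limiting quantity with $C\|\partial_x f\|_{L^2}^2$.
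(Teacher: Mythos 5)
This theorem is imported verbatim from \cite[Th.~5.1]{denisa} (see also \cite{MR3190994}); the paper you are reading contains no proof of it, so there is nothing internal to compare against. Measured against the standard proof in that literature, your strategy is essentially the same one: mollify in space, apply classical Aubin--Lions to $f_n^{\varepsilon}=J_\varepsilon\ast_x f_n$ (hypotheses (1) and (3) give exactly what is needed there), and control the mollification error uniformly via the nonlocal energy (2). Your outline is correct, and you have correctly located the only genuinely technical point, namely converting the averaged bound $\int_0^T\int_\rr \rho(w)\,n^2\|f_n(\cdot+w/n,t)-f_n(\cdot,t)\|_{L^2(\rr)}^2\,dw\,dt\le M$ into a translation estimate valid at every scale. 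Two remarks on that step. First, the monotonicity of $\rho$ alone does not give positivity near the origin; you also need $\rho\not\equiv 0$ (implicit in the statement, and satisfied in the application since $\rho\le K$ with $K>0$ near $0$), which then yields $\rho\ge c>0$ on some $[-\delta,\delta]$. Second, if you carry out the averaging/telescoping carefully --- using that $\omega_n(h)=\|f_n(\cdot+h)-f_n\|_{L^2((0,T)\times\rr)}$ is even and subadditive, that its mean square over $|s|\le\delta/n$ is $O(n^{-2})$, and Cauchy--Schwarz over the choice of micro-step --- what you actually obtain is $\omega_n(h)\le C(|h|+1/n)$, not a bound by $C|h|$ alone. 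This makes the literal claim $\sup_{n\ge1}\|f_n-f_n^{\varepsilon}\|_{L^2}\to 0$ require one extra (easy) observation: for each fixed $n$ the mollification error tends to $0$ anyway, so one splits $n\le N_0$ from $n>N_0$; alternatively, the bound $C(\varepsilon+1/n)$ is already enough for your Step 3 diagonal argument. The identification $f\in L^2((0,T);H^1(\rr))$ then follows either from the Bourgain--Brezis--Mironescu--Ponce lower semicontinuity you cite, or more elementarily by passing to the limit in $\omega_n(h)\le C(|h|+1/n)$ to get $\|f(\cdot+h)-f\|_{L^2}\le C|h|$ and invoking the difference-quotient characterization of $H^1$. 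With these details supplied, the proof is complete.
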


We use now that our function $K$ is positive in a neighborhood of the origin. Thus we can take a function $\rho\leq K$ satisfying the properties in Theorem \ref{compac}.
Applying this result to the family $\{u_\lambda\}_{\lambda\geq 0}$ in the  time interval $(t_1,t_2)$ we obtain that there exists a function $v\in L^2((t_1,t_2),H^1(\rr))$ such that up to a subsequence $u_\lambda\to v$ in $L^2((t_1,t_2);L^2_{loc}(\rr)).$  Using the uniqueness of the limit we obtain that in fact $v=U$.
Thus $U\in L^2_{loc}((0,\infty);H^1(\rr))\cap L^1_{loc}((0,\infty)\times\rr)$ and $u_\lambda\to U$ in  $L^2_{loc}((0,\infty)\times \rr)$ so the same holds in $L^1_{loc}((0,\infty)\times \rr)).$ 

Now we prove that $u_\lambda$ converges strongly  to $v$ in $L^1_{loc}((0,\infty);L^1(\rr))$ by obtaining uniform estimates on the tails of 
$\{u_\la\}_{\la>1}$.
\begin{lemma}\label{strong}
There exists a constant $C=C(K,\|\varphi\|_{L^1(\rr)},\|\varphi\|_{L^\infty(\rr)})$ such that the following inequality 
\begin{equation}\label{ineq}
\int_{|x|>2R}u_\lambda (t,x)dx\leq\int_{|x|>R}\varphi(x)dx+C\left(\frac t{R^2}+\frac{t^\frac 12}{R}\right)
\end{equation}
holds for any $t>0$ and $R>0$, uniformly on $\lambda\geq 1.$
\end{lemma}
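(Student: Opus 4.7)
The plan is to multiply the rescaled equation \eqref{rescal} by a smooth cutoff $\psi_R$ concentrated outside the ball of radius $R$, integrate in space and time, and bound the resulting diffusive and convective contributions separately. As in the proof of Theorem \ref{decay}, I first reduce to the case $\varphi \ge 0$ (so $u_\lambda \ge 0$) by replacing $\varphi$ with $\pm|\varphi|$ and invoking the comparison principle of Theorem \ref{max-principle}. Fix $\psi_R \in C^\infty(\rr)$ with $0 \le \psi_R \le 1$, $\psi_R \equiv 0$ on $\{|x|\le R\}$, $\psi_R \equiv 1$ on $\{|x|\ge 2R\}$, and $\|\psi_R'\|_\infty \le C/R$, $\|\psi_R''\|_\infty \le C/R^2$ (obtained by scaling a fixed profile).

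Multiplying \eqref{rescal} by $\psi_R$ and integrating gives $\frac{d}{dt}\int u_\lambda \psi_R\,dx = D_\lambda(t) + N_\lambda(t)$, where
$$D_\lambda(t) = \lambda^2 \int_\rr u_\lambda(t,x)\bigl(K_\lambda \ast \psi_R - \psi_R\bigr)(x)\,dx$$
and $N_\lambda$ is the convective contribution. Using that $K$ is even, I rewrite $(K_\lambda \ast \psi_R - \psi_R)(x) = \tfrac12 \int K_\lambda(y)(\psi_R(x+y)+\psi_R(x-y)-2\psi_R(x))\,dy$, and a second-order Taylor expansion together with $\int K_\lambda(y) y^2\,dy = 2A/\lambda^2$ yields $|(K_\lambda \ast \psi_R - \psi_R)(x)| \le C A /(\lambda^2 R^2)$. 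This is where the hypothesis $K \in L^1(\rr,1+x^2)$ enters. Consequently $|D_\lambda(t)| \le C\|u_\lambda(t)\|_{L^1(\rr)}/R^2 \le C \|\varphi\|_{L^1(\rr)}/R^2$, and the $t/R^2$ contribution follows by integration in time.

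For the convective term I would symmetrize via the oddness of $G$ to obtain
$$N_\lambda(t) = \frac{\lambda}{2}\iint \bigl(\psi_R(x)-\psi_R(y)\bigr) G_\lambda(x-y) \Big(\tfrac{u_\lambda(t,x)+u_\lambda(t,y)}{2}\Big)^2 dx\,dy,$$
then apply $|\psi_R(x)-\psi_R(y)| \le \|\psi_R'\|_\infty |x-y|$, the inequality $((a+b)/2)^2 \le (a^2+b^2)/2$, and a change of variables $z=x-y$. Combining these with $|G| \le \cgk K$ and $\int |z||G_\lambda(z)|\,dz = \lambda^{-1} \int |w||G(w)|\,dw$ gives $|N_\lambda(t)| \le (C/R)\|u_\lambda(t)\|_{L^2(\rr)}^2$, and the decay bound \eqref{est.norme.p} from Lemma \ref{est.2.compac} produces $|N_\lambda(t)| \le C/(R\sqrt{t})$, whose integral in time is $C t^{1/2}/R$.

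Putting the estimates together and integrating from $0$ to $t$ yields
$$\int_{|x|>2R} u_\lambda(t,x)\,dx \le \int u_\lambda(t)\psi_R\,dx \le \int \varphi_\lambda \psi_R\,dx + C\Big(\frac{t}{R^2}+\frac{t^{1/2}}{R}\Big),$$
and since $\lambda \ge 1$ and $\varphi\ge 0$, $\int \varphi_\lambda \psi_R\,dx \le \int_{|y|>\lambda R}\varphi(y)\,dy \le \int_{|y|>R}\varphi(y)\,dy$, closing the argument. The delicate point is the convective term: the prefactor in \eqref{rescal} is only $\lambda$, not $\lambda^2$, so I can afford only a single factor of $|x-y|$ from the difference $\psi_R(x)-\psi_R(y)$, and must pay for it with the first moment of $G_\lambda$, which is $O(\lambda^{-1})$; the resulting borderline-integrable rate $t^{-1/2}$ is exactly what the $L^2$-decay of Lemma \ref{est.2.compac} supplies.
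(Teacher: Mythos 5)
Your proposal is correct and follows essentially the same route as the paper: multiply \eqref{rescal} by a cutoff $\psi_R$, bound the diffusive term by $t\|\psi_R''\|_\infty\|\varphi\|_{L^1(\rr)}\lesssim t/R^2$ using the second moment of $K$ (the paper invokes \cite[Lemma 2.2]{denisa} for exactly the Taylor-expansion bound you derive), and symmetrize the convective term with the oddness of $G$, pay one factor of $|x-y|$ against the first moment of $G_\lambda=O(\lambda^{-1})$, and integrate the $L^2$-decay $\|u_\lambda(s)\|_{L^2(\rr)}^2\lesssim s^{-1/2}$ to get $t^{1/2}/R$. Your explicit treatment of $\int\varphi_\lambda\psi_R\le\int_{|y|>\lambda R}\varphi\le\int_{|y|>R}\varphi$ for $\lambda\ge1$, and the upfront reduction to $\varphi\ge0$ via Theorem \ref{max-principle}, only make explicit what the paper leaves implicit.
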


\begin{proof}
We consider the function $\Psi\in C^\infty(\rr)$, defined by 
\begin{equation}\label{psi}
\Psi(x)=\left\{
\begin{array}{ll}
0, & |x|<1,\\
1, & |x|>2.
\end{array}
\right.
\end{equation}
We set $\Psi_R(x)=\Psi(x/R)$.
We multiply  equation \eqref{rescal} with $\Psi_R(x)$ and integrate in time and space
\begin{align*}
\int_\rr  & u_\lambda(x,t)\Psi_R(x)dx-\int_\rr \varphi_\lambda(x)\Psi_R(x)(x)dx\\
&=\int^t_0\int_\rr\lambda^2((K_\lambda\ast \Psi_R)(x)-\Psi_R(x))(x)u_\lambda (s,x)dxds\\
\nonumber &\quad +\frac{\lambda}{8}\int^t_0\int_\rr \int_\rr G_\lambda (x-y) (u_\lambda(s,x)+u_\lambda(x,y))^2(\Psi_R(x)-\Psi_R(y))dxds\\
\nonumber&=A_\la+B_\la/8.
\end{align*}
In the case of $A_\la$ we use the results in \cite[Lemma 2.2]{denisa}  and the $L^1(\rr)$-estimate on $u$, so on $u_\lambda$,
to obtain
\begin{align*}
|A_\la|&\leq\int^t_0\|\lambda^2(K_\lambda\ast\Psi_R-\Psi_R)(x)\|_{L^\infty(\rr)}\|u_\lambda(s)\|_{L^1(\rr)}ds\\
&\lesssim t\|(\Psi_R)_{xx}\|_{L^\infty(\rr)}\|\varphi\|_{L^1(\rr)}\lesssim \frac t{R^2}\|\varphi\|_{L^1(\rr)}.
\end{align*}
The second term, $B_\la$, satisfies
\begin{align*}
|B_\lambda|&\lesssim  \lambda \|(\psi_R)' \| _{L^\infty(\rr)}\int _0^t \iint_{\rr^2}|G_\lambda(x-y)| (u^2_\lambda(x,y)+u^2_\lambda(s,x))|x-y|dxdyds\\
&\lesssim \frac\lambda R\int _0^t \iint _{\rr^2}u^2_\lambda(s,y) |G_\lambda(x-y)| |x-y|dxdyds\\
&=\frac 1R \int _{\rr}G(z)|z|dz \int _0^t\int _{\rr} u_{\lambda}^2(s,x)dxds\lesssim \frac 1R \int _0^t s^{-1/2}=\frac{t^{1/2}}R.
\end{align*}
Hence
$$\int_{|x|>2R}u_\lambda (t,x)dx\leq \int_{\rr}u_\lambda(t,x) \psi_R(x)dx\leq \int_\rr{\varphi_\lambda}(x)\psi_R(x)+C(\frac t{R^2}+\frac{t^{1/2}}R)$$
and the proof finishes.
\end{proof}

\textbf{Step III. Passing to the limit.}
Using the results obtained in the previous step we can pass to the limit in the equation involving $u_\lambda$.
Let us choose $0<\tau<t$. For any test function $\psi\in C^\infty_c(\rr^d)$, we multiply   \eqref{rescal} by $\psi$ and we integrate over $(\tau,t)\times\rr$. We get: 
\begin{align}\label{Psi_x}
\int^t_\tau\int_\rr u_{\lambda,s}(s,x)&\psi(x)dxds =\int^t_\tau\int_\rr \lambda^2(K_\lambda\ast u_\lambda-u_\lambda)(s,x)\psi(x)dxds\\
\nonumber &\quad +\frac\lambda 4\int_\tau ^t \int _\rr \int_\rr G_\lambda(x-y) (u_\lambda(s,x)+u_\lambda(s,y))^2 \psi(x)dxdyds.
\end{align}
Integrating by parts with respect to variable $s$ in the left hand side and changing the variables in the right hand side 
we obtain:
\begin{align*}
\int_\rr u_{\lambda}(t,x)\psi(x)dx-&\int_\rr u_{\lambda}(\tau,x)\psi(x)dx =\int^t_\tau\int_\rr\lambda^2(K_\lambda\ast\psi-\psi)(x)u_\lambda(s,x)dxds\\
\nonumber &\quad +\frac\lambda 8\int_\tau ^t \int _\rr \int_\rr G_\lambda(x-y) (u_\lambda(s,x)+u_\lambda(s,y))^2 (\psi(x)-\psi(y))dxdyds.
\end{align*}
Since for $t>0$, $u_\lambda(t)\rightharpoonup  U(t)$ in $L^2_{loc}(\rr)$
we can pass to the limit in the left hand side term
\begin{align}\label{conv_1}
\int_\rr  u_{\lambda}(t,x)\Psi(x)dx-\int_\rr u_{\lambda}(\tau,x)\Psi(x)dx \to \int_\rr U(t,x)\Psi(x)dx-\int_\rr U(\tau,x)\Psi(x)dx.
\end{align}
Using the same arguments as in \cite{denisa} we have 
\begin{align}\label{conv_2}
A_\lambda \to \int^t_\tau\int_\rr U(s,x)A\Psi_{xx}(x)dxds.
\end{align}
We now prove that 
\begin{align}
\label{ultimultermen}
B_\lambda\to  4B \int_\tau ^t \int _\rr U^2(s,x)\psi'(x)dxds, \quad \lambda\to\infty.
\end{align}

We first point out that since $u_\lambda\rightarrow U$ in $L^1((\tau,T)\times \rr)$ and $u_\lambda(s)$, $s\in [\tau,t]$ is uniformly bounded in any space $L^p(\rr)$ 
with $p>2$ we obtain that $u_\lambda\rightarrow U$ in $L^2((\tau,T)\times \rr)$.
Making a change of variables and using a Taylor expansion of order one 
 we write $B_\lambda$ as follows
\begin{align*}
  B_\lambda &=\int _{\rr} G(z)z \int _{\tau}^t \int _{\rr}   \Big( u_\lambda(y+\frac{z}{\lambda})+u_\lambda(y) \Big)^2  \int _{0}^1 \psi'\Big(y+\frac{\sigma z}{\lambda}\Big)
d\sigma dy ds\\
  &=\int _{\rr} G(z)z I_\lambda(z)dz.
\end{align*}
Note that $I_\lambda(z)$ satisfies 
\[
  I_\lambda(z)\lesssim \|\psi'\|_{L^\infty(\rr)} \int _\tau ^t  \|u_\lambda (s)\|_{L^2(\rr)}^2ds\leq C(\tau) \|\psi'\|_{L^\infty(\rr)}.
\]
We claim that for any $z\in \rr$ the following holds
\begin{equation}
\label{claim.1}
   I_\lambda(z)\to 4B \int_\tau ^t \int _\rr U^2(s,x)\psi'(x)dxds,\quad \text{as}\
   \lambda\rightarrow \infty.
\end{equation}
Using  that $G$ has a first momentum in $L^1(\rr)$ and  Lebesque's dominated convergence theorem we obtain the desired estimate.

We now prove claim \eqref{claim.1}. Since $u_\lambda\to  U$ in $L^2((\tau,t)\times \rr)$ then $u_\lambda(\cdot, \cdot +z/\lambda)\rightarrow U$ in $L^2((\tau,t)\times \rr)$. This gives us that 
$( u_\lambda(\cdot +{z}/{\lambda})+u_\lambda )^2\rightarrow 4U^2$ in $L^1((\tau,t)\times \rr)$.
Moreover $\int _0^1\psi'(y +\sigma z/\lambda)d\sigma \to \psi'(y) $  uniformly in $y \in \rr$. This proves \eqref{claim.1}.

Using \eqref{conv_1}, \eqref{conv_2} and \eqref{ultimultermen} we obtain that $U$  satisfies
\[
  U_t=AU_{xx}-B(u^2/2)_x,\quad \text{in}\, \mathcal{D}'((0,\infty)\times\rr).
\]

In order to identify the initial data with the same arguments as in Lemma \ref{strong} we obtain that for any $\psi\in C_c^2(\rr)$ the following holds:
\[
 \Big  |\int _\rr u_\lambda(t,x)\psi(x)dx-\int _\rr u_\lambda(0,x)\psi(x)dx\Big|\lesssim t\|\psi''\|_{L^\infty(\rr)}+t^{1/2}\|\psi'\|_{L^\infty(\rr)}.
\]
This shows that 
\[
  \lim_{t\downarrow 0}\int _\rr U(t,x)\psi(x)dx=m\psi(0).
\]
By an approximation argument we obtain that $U(t)\rightarrow m\delta_0$ in the sense of bounded measures on $\rr$.

Using that $U\in  L^2_{loc}((0,\infty),H^1(\rr))$, estimate \eqref{est.U} and classical regularity results for parabolic equations we obtain that $U$
is the unique solution of system \eqref{burgers} (see \cite{MR1032626} for more details about the existence and uniqueness of solutions for system \eqref{burgers}). This guarantees that the whole family $\{u_\lambda\}_{\lambda>0}$ converge to $U$, and  not only a subsequence.

{\bf Step IV. } From Step II it follows that for some $t_0>0$ we have
\[
  \|u_\lambda(t_0)-U(t_0)\|_{L^1(\rr)}\rightarrow 0\quad \text{as}\quad \lambda\rightarrow \infty.
\]
This implies that
\[
  \lim _{t\rightarrow\infty}\|u(t)-U(t)\|_{L^1(\rr)}=0.
\]
When $1<p<\infty$ the decay property of the $L^p(\rr)$-norms, $1\leq p<\infty$, of $u_\lambda$ and $U$ gives us that
\[
  \|u(t)-U(t)\|_{L^p(\rr)}=o(t^{-\frac 12(1-\frac 1p)}),
\]
which finishes the proof.

\medskip
 {\bf
Acknowledgements.}

L. Ignat was partially supported by Grant PN-II-RU-TE- 2014-4-0007 of the Romanian National Authority for Scientific Research, CNCS -- UEFISCDI.

T. Ignat was  supported by Grant PN-II-RU-TE-2014-4-0657 of the Romanian National Authority for Scientific Research, CNCS -- UEFISCDI and by a doctoral fellowship offered by IMAR.


\bibliographystyle{plain}
\bibliography{biblio} 

\end{document}